\theoremstyle{definition}
\theoremstyle{plain}
\newcommand{\C}{\mathbb{C}}
\newcommand{\F}{\mathcal{F}}
\newcommand{\wt}{\textup{wt}}
\theoremstyle{plain}
\newtheorem{lemma}{Lemma}
\newtheorem{proposition}{Proposition}
\theoremstyle{definition}
\theoremstyle{remark}
\newtheorem{remark}{Remark}
\newcommand{\Z}{\mathbb{Z}}
\newcommand{\V}{\mathcal V}
\numberwithin{equation}{section} % to get equations numbered
\newcommand{\N}{\ensuremath{\mathbb {N}}}
\newcommand{\U}{\ensuremath{\mathcal{U}}}
\newcommand{\g}{\ensuremath{\Gamma}}
\newcommand{\ps}{{\raise 1pt\hbox{\tiny (}}}
\newcommand{\pss}{{\raise 1pt\hbox{\tiny [}}}
\newcommand{\pdd}{{\raise 1pt\hbox{\tiny ]}}}
\newcommand{\pd}{{\raise 1pt\hbox{\tiny )}}}
\newcommand{\bs}{{\raise 1pt\hbox{\tiny [}}}
\newcommand{\bd}{{\raise 1pt\hbox{\tiny ]}}}
\def\cross{\mathinner{\mathrel{\raise0.8pt\hbox{$\scriptstyle>$}}
                 \joinrel\mathrel\triangleleft}}
\def\U{\mathcal{U}}
\def\W{\mathcal{V}}
\def\W{\mathcal{W}}
\def\K{\mathcal{K}}
\newcommand{\be}{\begin{equation}}
\newcommand{\ee}{\end{equation}}
\newcommand{\nn}{\nonumber \\}
\newcommand{\nc}{\newcommand}
\nc{\cali}{\mathcal}
\nc{\on}{\operatorname}
\nc{\Wick}{{\mb :}}
\nc{\ddz}{\frac{\partial}{\partial z}}
\nc{\ch}{\mbox{ch}}
\nc{\Oo}{{\cali O}}
\nc{\cond}{|\,}
\nc{\bib}{\bibitem}
\nc{\pone}{\Pro^1}
\nc{\pa}{\partial}
\nc{\arr}{\rightarrow}
\nc{\larr}{\longrightarrow}
\nc{\ket}{)}
\nc{\bra}{(}
\nc{\gam}{\bar{\gamma}}
\nc{\ep}{\epsilon}
\nc{\su}{\widehat{{\mf s}{\mf l}}_2}
\nc{\sw}{{\mf s}{\mf l}}
\nc{\h}{{\mf h}}
\nc{\n}{{\mf n}}
\nc{\ab}{\mf{a}}
\nc{\is}{{\mb i}}
\nc{\js}{{\mb j}}
\nc{\He}{{\cali H}}
\nc{\inv}{^{-1}}
\nc{\ol}{\overline}
\nc{\wh}{\widehat}
\nc{\dst}{\displaystyle}
\nc{\delt}{\partial_t}
\nc{\ddt}{\frac{\partial}{\partial t}}
\nc{\delx}{\partial_x}
\nc{\mb}{\mathbf}
\nc{\mf}{\mathfrak}
\nc{\mbb}{\mathbb}
\nc{\Ctt}{\C((t))}
\nc{\Ct}{\C[t,t\inv]}
\nc{\ghat}{\wh{\g}}
\nc{\un}{\underline}
\nc{\mc}{\mathcal}
\nc{\BB}{{\mc B}}
\nc{\bb}{{\mf b}}
\nc{\kk}{{\mf k}}
\nc{\frob}{\times}
\nc{\sm}{\setminus}
\nc{\Pp}{{\mathbb P}^1}
\nc{\Aa}{{\mc A}}
\nc{\AutO}{\on{Aut}\Oo}
\nc{\AUTO}{\un{\on{Aut}}\Oo}
\nc{\AUTK}{\un{\on{Aut}}\K}
\nc{\Heout}{\He_{\out}}
\nc{\Hetil}{{\widetilde\He}}
\nc{\wb}{\overline}
\nc{\Res}{\on{Res}}
\nc{\pitil}{\Pi}
\nc{\Ctil}{\wt{C}}
\nc{\auto}{\on{Aut} \Oo}
\nc{\phitil}{\wt{\phi}}
\nc{\gz}{\g_{\vec z}}
\nc{\tensorM}{\bigotimes_{i=1}^N{\mathbb M}_i}
\nc{\tensorW}{\bigotimes_{i=1}^N W_{\nu_i,k}}
\nc{\out}{\on{out}}
\nc{\m}{{\mathfrak m}}
\nc{\gx}{\g^0_{\vec x}}
\nc{\hx}{\He^0_{\vec x}}
\nc{\tensorpi}{\pi_{\nu_1,\ldots,\nu_N}^\kappa}
\nc{\Phizw}{\Phi_{\vec w}({\vec z})}
\nc{\Pro}{{\mathbb P}}
\nc{\De}{D}
\nc{\us}{\underset}
\nc{\Ll}{\mc L}
\nc{\dR}{\on{dR}}
\nc{\T}{{\mc T}}
\nc{\Xn}{\overset{\circ}X{}^n} \nc{\Dn}{\overset{\circ}D{}^n}
\nc{\Dxn}{\overset{\circ}D{}^n_x} \nc{\varphitil}{\wt{\varphi}}
\nc{\lf}{{\mf l}}
\nc{\Wir}{\on{Vir}}
\nc{\bfgn}{{\bf g}_n}
\nc{\bfzn}{{\bf z}_n}
\begin{document}
\title[Canonical torsor bundles of prescribed rational functions]
{Canonical torsor bundles of prescribed rational functions on complex curves}
%%
                                
%%%%%%%%%%%%%%%%%%%%%%%%%%%%%%%%%%%%%%%%%%%%%%%%%%%%%%%%%%%%%%%%%%%%%%%%%%%%%%%
\author{A. Zuevsky} 
\address{Institute of Mathematics \\ Czech Academy of Sciences\\ Zitna 25, Prague\\ Czech Republic}

\email{zuevsky@yahoo.com}

%%%%%%%%%%%%%%%%%%%%%%%%%%%%%%%%%%%%%%%%%%%%%%%%%%%%%%%%%%%%%%%%%%%%%%%%%
% You may repeat \author \address as often as necessary                 %
%%%%%%%%%%%%%%%%%%%%%%%%%%%%%%%%%%%%%%%%%%%%%%%%%%%%%%%%%%%%%%%%%%%%%%%%%
%%
\begin{abstract}
%%
%%%%%%%%%%%%%%%%%%%%%%%%%%%%%%%%%%%%%%%%%%%%%%%%%%%%%%%%%%%%%%%%%%%%%%%%%%5
%%
Prescribed rational functions constitute a subset of rational functions satisfying  
certain symmetry and analyticity conditions. 
%%%%
 Over a smooth complex curve $M$, 
we construct explicitly a bundle $\W_M$ with values in
prescribed rational functions.   
%%%%
%%%%%%%%%%%%%%%%%%%%%%%%%%%%%%%%%%%%%%%%%%%%%%%%%%%%%%%%%%%%%%%%%%%%%%%%%%%%%%%%%%%
%%
An intrinsic coordinate-independent formulation for such bundles  
 is given.
The construction presented in this paper is useful for studies of the  
 canonical cosimplicial cohomology of 
  infinite-dimensional Lie algebras on smooth manifolds, as well as for 
purposes of conformal field theory,  
 deformation theory, and the theory 
of foliations. 
%%%%

\bigskip 
AMS Classification: 53C12, 57R20, 17B69 
\end{abstract}

\keywords{Rational functions with prescribed properties, fiber bundles, complex manifolds}

\vskip12pt  % insert '\vskip12pt' while using '\twocolumn' command
%\vskip28pt % if there is no keywords

\maketitle
%%%
%%%%%%%%%%%%%%%%%%%%%%%%%%%%%%%%%%%%%%%%%%%%%%%%%%%%%%%%%%%%%%%%%%%%%%%%%%%%%%%%%%%%%%%%%%%%%%
\section{Data availability statement}
The author confirms  that: 

\medskip 
1.) All data generated or analysed during this study are included in this published article. 

\medskip 
2.)   Data sharing not applicable to this article as no datasets were generated or analysed during the current study.

%%%%%%%%%%%%%%%%%%%%%%%%%%%%%%%%%%%%%%%%%%%%%%%%%%%%%%%%%%%%%%%%%%%%%%%%%%%%%%%%%%%%%%%%%%
%%%%%%%%%%%%%%%%%%%%%%%%%%%%%%%%%%%%%%%%%%%%%%%%%%%%%%%%%%%%%%%%%%%%%%%%%%%%%%%%%%%%%%%%%%
\section{Introduction}
%%
%% 
%%%%%%%%%%%%%%%%%%%%%%%%%%%%%%%%%%%%%%%%%%%%%%%%%%%%%%%%%%%%%%%%%%%%%%%%%%%%%%%%%
%%%%%%%%%%%%%%%%%%%%%%%%%%%%%%%%%%%%%%%%%%%%%%%%%%%%%%%%%%%%%%%%%%%%%%%%%%%%%%%%% 
%%%
For purposes of continuous cohomology theory computation on complex manifolds   
\cite{Fei, Wag} and their foliations \cite{BS, Lo}, it is important to be able to define and construct 
objects such as bundles on auxiliary manifolds \cite{BS, PT, Wag}.
%%%% 
One is eager to introduce  
 such bundles so that their sections would be canonical, i.e., defined 
over abstract disks    
independent of the choice of local coorinates over marked points on a complex manifold.  
%%%% 
%%
In order to do this, one needs a precise description of transformation 
properties of sections under changes of coordinates.
%%%%
%%
%%%%%%%%%%%%%%%%%%%%%%%%%%%%%%%%%%%%%%%%%%%%%%%%%%%%%%%%%%%%%%%%%%%%%%%%%%%%%%%%%%%%%%%%%%%%%%%%%%%%%%%%%%
%%%%%%%%%%%%%%%%%%%%%%%%%%%%%%%%%%%%%%%%%%%%%%%%%%%%%%%%%%%%%%%%%%%%%%%%%%%%%%%%%%%%%%%%%%%%%%%%%%%%%%%%%
The construction of such bundles on a smooth complex curve $M$ is grounded on the notion of 
prescribed rational functions we introduce in this paper.  
This are functions with specified analytic behavior, 
expressed through matrix elements defined for infinite-dimensional Lie algebras,
 and satisfying certain symmetry properties. 
%%%%
%%%%%%%%%%%%%%%%%%%%%%%%%%%%%%%%%%%%%%%%%%%%%%%%%%%%%%%%%%%%%%%%%%%%%%%%%%%%%%%%%%%%%%%%%%%%%%%%

%%%%%%%%%%%%%%%%%%%%%%%%%%%%%%%%%%%%%%%%%%%%%%%%%%%%%%%%%%%%%%%%%%%%%%%%%%%%%%%%%%
%%%%%%%%%%%%%%%%%%%%%%%%%%%%%%%%%%%%%%%%%%%%%%%%%%%%%%%%%%%%%%%%%%%%%%%%%%%%%%%%%%
The purpose of this paper is to construct, 
 in an intrinsic coordinate-independent way,  
a canonical fiber bundle of rational 
functions with defined analytical properties (described in section \ref{rational})
 which we call prescribed rational functions. 
%%%%
%%%%%%%%%%%%%%%%%%%%%%%%%%%%%%%%%%%%%%%%%%%%%%%%%%%%%%%%%%%%%%%%%%%%%%%%%%%%%%
%%
The construction involves 
torsors and twists of an infinite-dimensional Lie algebra $\mathcal G$ by the group of 
automorphisms of local coordinates independent transformations of non-intersecting 
domains 
of a number of points on a smooth complex curve. 
%%%%
%%
Prescribed rational functions are obtained via non-degenerate bilinear pairings on the algebraic 
completion of the space of $\mathcal G$-valued formal series.  
%%%%
%%
One of the essential ideas of this paper is to use the language of prescribed rational functions from 
the very beginning avoiding non-commutative ingredients as much as possible.  
%%%%
Thus, analytical properties and convergence of such rational functions are assumed. 
%%%%

%%%%%%%%%%%%%%%%%%%%%%%%%%%%%%%%%%%%%%%%%%%%%%%%%%%%%%%%%%%%%%%%%%%%%%%%%%%%%%%%%%%%%%%%%%%%%%%%%%%%%%%%%%%%%%%%%%%%
%%%%%%%%%%%%%%%%%%%%%%%%%%%%%%%%%%%%%%%%%%%%%%%%%%%%%%%%%%%%%%%%%%%%%%%%%%%%%%%%%%%%%%%%%%%%%%%%%%%%%%%%%%%%%%%%%%%%
The idea to study of the cohomology of certain bundles on a smooth manifold $M$ and to
connect this to the cohomology of $M$ has first appeared in \cite{BS}. 
  Let $Vect(M)$ be the Lie algebra of vector fields on $M$.   
Bott and Segal 
proved that the Gelfand-Fuks cohomology $H^*(Vect(M))$ \cite{Fuks} 
 is isomorphic to the singular cohomology $H^*(E)$ 
of the space $E$ of continuous cross sections of a certain 
fiber bundle $\mathcal E$ over $M$. 
Authors of \cite{PT, Sm} continued to use advanced topological methods of \cite{BS} 
 for more general cosimplicial spaces of maps.   
%%%%
%%%%%%%%%%%%%%%%%%%%%%%%%%%%%%%%%%%%%%%%%%%%%%%%%%%%%%%%%%%%%%%%%%%%%%%%%%%%%%%%%%%

%%%%%%%%%%%%%%%%%%%%%%%%%%%%%%%%%%%%%%%%%%%%%%%%%%%%%%%%%%%%%%%%%%%%%%%%%%%%%%%%%%%
%%%%%%%%%%%%%%%%%%%%%%%%%%%%%%%%%%%%%%%%%%%%%%%%%%%%%%%%%%%%%%%%%%%%%%%%%%%%%%%%%%%
Let us introduce the general notations used in this paper. 
%%%%
We denote by boldface vectors of elements, e.g., ${\bf a}_n=(a_1, \ldots, a_n)$,  
 and the same for all types of objects used in the text.   
%%%%
We also express as $({\bf a}_j)_n$ the $j$-th component of ${\bf a}_n$. 
The prime $'$ denotes the ordinary derivative. 
%%%%
%%%%%%%%%%%%%%%%%%%%%%%%%%%%%%%%%%%%%%%%%%%%%%%%%%%%%%%%%%%%%%%%%%%%%%%%%%%%%%%%%%%%%%%%%%%%%%%%%%%%
%%
Let $M$ be a smooth complex curve, $\mathcal G$ be an infinite-dimensional Lie algebra, 
$G_{{\bf z}_n}$ be the graded (with respect to a grading operator $K_G$)  
algebraic completion of the space of formal series individually in each 
of ${\bf z}_n$-variables. 
%%%%
Assuming that there exists a non-degenerate bilinear pairing $(. , .)$ on $G_{{\bf z}_n}$,   
 we denote ${\bf x}_n=({\bf g}_n, {\bf z}_n \;{\bf dz}_n)$   
for ${\bf g}_n$ of the $n$-th power ${\bf G}_{{\bf z}_n}=G_{{\bf z}_n}^{\otimes n}$ of $G_{{\bf z}_n}$, and 
$G^*_{ {\bf z}_n }$ be the dual to $G_{{\bf z}_n}$ with respect to $(. , .)$.  
%%%%
%%%%%%%%%%%%%%%%%%%%%%%%%%%%%%%%%%%%%%%%%%%%%%%%%%%%%%%%%%%%%%%%%%%%%%%%%%%%%%%%
%%
In case when formal variables ${\bf z}_n$ are associated
to $n$ points ${\bf p}_n$ on $M$, 
we denote $G_{{\bf z}_n}$ by $G_{{\bf p}_n}$, and when ${\bf z}_n$
are substituted by local coordinates ${\bf t}_{{\bf p}_n}$ in vicinities 
of ${\bf p}_n$, we replace $G_{{\bf z}_n}$   
 by $G_{{\bf t}_{{\bf p}_n}}$.  
%%%%
%%%%%%%%%%%%%%%%%%%%%%%%%%%%%%%%%%%%%%%%%%%%%%%%%%%%%%%%%%%%%%%%%%%%%%%%%%%%%%%%%%%%%%%%%%%%
For fixed $\theta \in G^*_{{\bf p}_n}$,   
and varying ${\bf x}_n \in {\bf G}_{{\bf z}_n}$ we
 consider a vector $\overline{F}( {\bf x}_n)$ of matrix elements of the form 
\begin{equation}
\label{toppa}
F({\bf x}_n) = \left( \theta, f ({\bf x}_n) \right)  \in \C((z)),  
\end{equation}
where $F({\bf x}_n)$  depends implicitly on ${\bf g}_n \in {\bf G}_{{\bf z}_n}$.  
%%%%
%%%%%%%%%%%%%%%%%%%%%%%%%%%%%%%%%%%%%%%%%%%%%%%%%%%%%%%%%%%%%%%%%%%%%%%%%%%%%\
%%
We may view the vector $\overline{F}({\bf x}_n)$ of prescribed rational functions as a
section of a fiber bundle over a collection of non-intersecting punctured discs 
 ${\bf D}^\times_{ {\bf x}_n } = {\rm Spec}_{ {\bf x}_n}\; \C( ({\bf z}_j)_n )$, $1 \le j \le n$,  
 with an ${\rm End} \left( G_{ {\bf z}_n }\right)$-valued fiber $f({\bf x}_n) \in G_{{\bf z}_n}$. 
%%%%
%%%%%%%%%%%%%%%%%%%%%%%%%%%%%%%%%%%%%%%%%%%%%%%%%%%%%%%%%%%%%%%%%%%%%%%%%%%%%%%%
%%

%%%%%%%%%%%%%%%%%%%%%%%%%%%%%%%%%%%%%%%%%%%%%%%%%%%%%%%%%%%%%%%%%%%%%%%%%%%%%%%%%%%%%%%%%%%%%%%%%%%%
%%%%%%%%%%%%%%%%%%%%%%%%%%%%%%%%%%%%%%%%%%%%%%%%%%%%%%%%%%%%%%%%%%%%%%%%%%%%%%%%%%%%%%%%%%%%%%%%%%%%
%
In this paper we explain how to construct the bundle mentioned above 
 in the case when the space of prescribed rational functions 
 carries an action of the group ${\bf Aut}_{n} \; {\bm \Oo}^{(1)}_n$ of local 
 coordinates changes in vicinities of $n$ points on $M$.  
%%%%
%%%%%%%%%%%%%%%%%%%%%%%%%%%%%%%%%%%%%%%%%%%%%%%%%%%%%%%%%%%%%%%%%%%%%%%%%%%%%%%%%%%%%%%%%%%%%%%%%%%%
%%
 This means that the action of the group 
${\bf Aut}_n\; {\bm \Oo}^{(1)}_n 
={\rm Aut}_1\; {\Oo}^{(1)}_1\times \ldots \times {\rm Aut}_n\; {\Oo}^{(1)}_n$  
comes about by exponentiation 
of the action of corresponding Lie algebras $\left({\it Der}_{0, j} \; {\bm\Oo}^{(1)}_j \right)_n$, 
 $a \le j \le n$,  
via the action on $G_{ {\bf p}_n }$.  
%%%%
%%%%%%%%%%%%%%%%%%%%%%%%%%%%%%%%%%%%%%%%%%%%%%%%%%%%%%%%%%%%%%%%%%%%%%%%%%%%%%%%%%%%%%%%%%%%%%%%
%%%%%%%%%%%%%%%%%%%%%%%%%%%%%%%%%%%%%%%%%%%%%%%%%%%%%%%%%%%%%%%%%%%%%%%%%%%%%%%%%%%%%%%%%%%%%%%%%%%%%
%%
%%
The construction of ${\rm Aut} \; \Oo^{(1)}$ was a part of the formal geometry developed in 
 \cite{GK, GKF, BR}    
 as a method of applying
representation theory of infinite-dimensional Lie algebras to finite-dimensional 
geometry.
%%%%
%%
The action of ${\rm Aut}\; \Oo^{(1)}$  was also considered  
in \cite{H2}. 
%%%%
%%%%%%%%%%%%%%%%%%%%%%%%%%%%%%%%%%%%%%%%%%%%%%%%%%%%%%%%%%%%%%%%%%%%%%%%%%%%%%%%%%%%%%%%%%%%%%%%%%%%%%%
%%
A vector $\overline{F} ({\bf x}_n)$ of matrix elements of $F({\bf x}_n)$ gives rise to a section 
 $\overline{\F}( {\bf p}_n )$ of the intrinsic bundle $\W_M|_{ {\bf D}^\times_{ {\bf p}_n} }$   
with ${\rm End}\left( G_{ {\bf p}_n }\right)$-valued fibers. 
%%%

%%%%%%%%%%%%%%%%%%%%%%%%%%%%%%%%%%%%%%%%%%%%%%%%%%%%%%%%%%%%%%%%%%%%%%%%%%
%%%%%%%%%%%%%%%%%%%%%%%%%%%%%%%%%%%%%%%%%%%%%%%%%%%%%%%%%%%%%%%%%%%%%%%%%%
%%
 The representation in term of formal series in ${ {\bf t}_{ {\bf p}_n} }$ 
   allows us to find the precise transformation formula 
 for all elements of $G_{ {\bf p}_n }$ under the action of ${\bf Aut}_n\; {\bm \Oo}^{(1)}_n$.  
%%%%
%%%%  
%%%%%%%%%%%%%%%%%%%%%%%%%%%%%%%%%%%%%%%%%%%%%%%%%%%%%%%%%%%%%%%%%%%%%%%%%%%%%%%%%%%%%
We then use this formula to give an intrinsic geometric meaning to sections 
$\overline{\F}( {\bf p}_n)$ of the fiber bundle in coordinate-free formulation.   
%%%%
 Namely, we
attach to each $\mathcal G$, satisfying certain properties 
 a fiber bundle $\W_M$ on an arbitrary smooth complex 
curve $M$. 
%%%%
%%%%%%%%%%%%%%%%%%%%%%%%%%%%%%%%%%%%%%%%%%%%%%%%%%%%%%%%%%%%%%%%%%%%%%%%%%%%%%%%%%%%%%%%%%%%%%%%%%%%%%
%%
 Such geometric realization of $\overline{\F} ( {\bf p}_n )$ allows us to provide 
 a global geometric meaning to the space of prescribed rational functions $F({\bf x}_n)$    
 on arbitrary curves.  
%%%%
%%%%%%%%%%%%%%%%%%%%%%%%%%%%%%%%%%%%%%%%%%%%%%%%%%%%%%%%%%%%%%%%%%%%%%%%%%%%%%%%%%%%%%%%%%%%%%
%%%%%%%%%%%%%%%%%%%%%%%%%%%%%%%%%%%%%%%%%%%%%%%%%%%%%%%%%%%%%%%%%%%%%%%%%%%%%%%%%%%%%%%%%%%%%%%
Finally, we prove that the bundle $\W_M$ we constructed is canonical, i.e., its 
sections do not depend on a change ${\bf t}_{ {\bf p}_n} \mapsto \widetilde{\bf t}_{ {\bf p}_n}$ of coordinates  
around points ${\bf p}_n$ on $M$. 
%%%%
%%%%%%%%%%%%%%%%%%%%%%%%%%%%%%%%%%%%%%%%%%%%%%%%%%%%%%%%%%%%%%%%%%%%%%%%%%%%%%%%%%%%%%%%%%%%%%
%%%%%%%%%%%%%%%%%%%%%%%%%%%%%%%%%%%%%%%%%%%%%%%%%%%%%%%%%%%%%%%%%%%%%%%%%%%%%%%%%%%%%%%%%%%%%%
\section{Rational functions and differentials on abstract discs}
In this section we describe (partially following \cite{BZF}) the setup needed for formulation of further results.  
%%%%
Let $p$ be a point on a smooth complex curve $M$, and $t_p$ be a local coordinate in a vicinity of $p$.  
%%%%
 We replace the field of Laurent series $\C((t_p))$ 
by any complete topological algebra non-canonically isomorphic to 
$\C((t_p))$.
%%%%
%%%%%%%%%%%%%%%%%%%%%%%%%%%%%%%%%%%%%%%%%%%%%%%%%%%%%%%%%%%%%%%%%%%%%%%%%%%%%%%%%%%%%%%%%%%
%%%%%%%%%%%%%%%%%%%%%%%%%%%%%%%%%%%%%%%%%%%%%%%%%%%%%%%%%%%%%%%%%%%%%%%%%%%%%%%%%%%%%%%%%%%
\subsection{Abstract discs}
 We may consider the scheme underlying the $\C$-algebra $\C[[t_p]]$.   
Viewing $\C[[t_p]]$ as the ring of complex-valued functions on the affine scheme 
 $D_{t_p} = {\rm Spec}\;  \C[[t_p]]$, 
we call this scheme the standard disc $D_{t_p}$. 
%%%%
%%%%%%%%%%%%%%%%%%%%%%%%%%%%%%%%%%%%%%%%%%%%%%%%%%%%%%%%%%%%%%%%%%%%%%%%%%%%%%%%%%%%%%%
%%
As a topological space, $D_p$ can be described by the origin 
(corresponding to the maximal ideal $t_p \; \C[[t_p]]$) and 
 the generic point (the zero ideal).     
A morphism from $D$ to an affine scheme
$Z = {\rm Spec}\;  \mathcal R$, where $\mathcal R$ is a $\C$-algebra, 
is a homomorphism of algebras 
$\mathcal R \to \C[[t_p]]$.
 If $M$ is a curve, such a homomorphism may
be constructed by realizing $\C[[t_p]]$ as a completion of $\mathcal R$.
%%
%%%%%%%%%%%%%%%%%%%%%%%%%%%%%%%%%%%%%%%%%%%%%%%%%%%%%%%%%%%%%%%%%%%%%%%%%%%%%%%%%%%%%%
 Geometrically, this is an identification of the disc $D_p$ with the formal neighborhood of a point on the
curve $M$.   
%% 
%%%%
%%%%%%%%%%%%%%%%%%%%%%%%%%%%%%%%%%%%%%%%%%%%%%%%%%%%%%%%%%%%%%%%%%%%%%%%%%%%%%%%%%%%%%%%%%%%%%%%%%%%%%%%%%
%\begin{definition}
%%
An abstract disc is an affine scheme ${\rm Spec}\;  \mathcal R$, where $\mathcal R$ is a
$\C$-algebra isomorphic to $\C[[t_p]]$. 
%%    
%%
%\end{definition}
%%%%
%%%%%%%%%%%%%%%%%%%%%%%%%%%%%%%%%%%%%%%%%%%%%%%%%%%%%%%%%%%%%%%%%%%%%%%%%%%%%%%%%%%%
%%
For the abstract disk,   the maximal ideal $t_p \C[[t_p]]$ has a preferred generator $t_p$.
There is no preferred generator in the maximal ideal of $\mathcal R$, and no
preferred coordinate on an abstract disc. 
Denote 
by $\Oo_p$ the completion of the local ring of $M$. Then $\mathcal O_p$ is non-canonically isomorphic 
to $\Oo = \C[[t_p]]$. To specify such an isomorphism, or equivalently, an isomorphism
between
$D_p = {\rm Spec}\;  \Oo_p$, 
and $D_{t_p} = {\rm Spec}\;  \C[[t_p]]$, we need to choose a formal coordinate $t_p$ at $p \in M$, i.e., a topological
generator of the maximal ideal $\mathfrak m_p$ of $\Oo_p$.
 In general there is no preferred formal
coordinate at $p\in M$, and $D_p$ is an abstract disc.  
%%%%
%%%%%%%%%%%%%%%%%%%%%%%%%%%%%%%%%%%%%%%%%%%%%%%%%%%%%%%%%%%%%%%%%%%%%%%
%%%%%%%%%%%%%%%%%%%%%%%%%%%%%%%%%%%%%%%%%%%%%%%%%%%%%%%%%%%%%%%%%%%%%%%
\subsection{Rational functions attached to discs}
Now we would like to attach rational functions to  
%%%%
 the standard  $D_{t_p} = {\rm Spec}\;  \C[[t_p]]$ and to 
 any abstract discs $D_p$, where $p$ is a point on $M$.  
%%%%
%%%%%%%%%%%%%%%%%%%%%%%%%%%%%%%%%%%%%%%%%%%%%%%%%%%%%%%%%%%%%%%%%%%%%%%%%%%%%%%%%%%%%%%%%%%%
%%%%%%%%%%%%%%%%%%%%%%%%%%%%%%%%%%%%%%%%%%%%%%%%%%%%%%%%%%%%%%%%%%%%%%%%%%%%%%%%%%%%%%%%%%%%
%%
 We denote by $\mathcal K_x$  
  the field of fractions of the ring of integers  
$\mathbb Z$ is the rational field $\mathbb Q$, and the field of fractions of the polynomial ring 
$\mathcal K[{\bf t_p}_n]$ over a field $\mathcal K$ is the field of rational functions
 $\mathcal K({\bf t_p}_n)=\left\{(R_1({\bf t_p}_n))/(R_2({\bf t_p}_n)): 
 R_1, R_2 \in \mathcal K[{\bf t_p}_n] \right\}$. 
The field of fractions of an integral domain $\mathcal D$ is the smallest field containing $\mathcal D$, 
since it is obtained from $\mathcal D$ by adding the least needed to make $\mathcal D$ a field, 
namely the possibility of dividing by any non-zero element.
%%
%%%%
%%%%%%%%%%%%%%%%%%%%%%%%%%%%%%%%%%%%%%%%%%%%%%%%%%%%%%%%%%%%%%%%%%
%%
 If we choose a coordinate $t_p$ on
$D_p$, then we obtain isomorphisms $\mathcal O_p = \C[[t_p]]$ and $\mathcal K_p = \C((t_p))$. 
%%
%%%%%%%%%%%%%%%%%%%%%%%%%%%%%%%%%%%%%%%%%%%%%%%%%%%%%%%%%%%%%%%%%%%%%%%%%%%%%%%%
%%
 We denote by $D_p$ (resp., $D^\times_p$) the disc (resp., the punctured disc) 
at $p$, defined as ${\rm {\rm Spec}\; } \Oo_p^{(1)}$ (resp., ${\rm {\rm Spec}\; } \; {\mathcal K}_p$).
%%%%
%%%%%%%%%%%%%%%%%%%%%%%%%%%%%%%%%%%%%%%%%%%%%%%%%%%%%%%%%%%%%%%%%%%%%%%%%%%%%%%%%%%%%%%%%%
%%%%%%%%%%%%%%%%%%%%%%%%%%%%%%%%%%%%%%%%%%%%%%%%%%%%%%%%%%%%%%%%%%%%%%%%%%%%%%%%%%%%%%%%%%
\subsection{Differentials}
In this subsection we recall basic definitions concerning differentials \cite{BZF, Sch}.    
%%%%
%%%%%%%%%%%%%%%%%%%%%%%%%%%%%%%%%%%%%%%%%%%%%%%%%%%%%%%%%%%%%%%%%%%%%%%%%%%%%%%%%%%%%%%
%\begin{definition}
%%
 Let $k$ be a rational number. A $k$-differential on a smooth curve 
is by definition a section of the $k$-th tensor power of the canonical line bundle $\Omega$.  
%%
%%
%\end{definition}
%%%%
%%
Choosing a local coordinate $t_p$ we may trivialize $\Omega^{\otimes k}$ by the non-vanishing 
section $(dt_p)^{\otimes k}$. 
%%%%
%%%%%%%%%%%%%%%%%%%%%%%%%%%%%%%%%%%%%%%%%%%%%%%%%%%%%%%%%%%%%%%%%%%%%%%%%
%%
%%
Any section of $\Omega^{\otimes k}$ may then be written as $f(t_p)(dt_p)^{\otimes k}$.  
%%%%
%%
 If we choose another coordinate $\widetilde{t}_p = \rho(t_p)$, 
then the same section will be written as $g(\widetilde{t}_p)(d\widetilde{t}_p)^{\otimes k}$, 
where
\[
f(t_p) = g(\rho(t_p))(\rho'(t_p))^{\otimes k}.
\]
%%%%
%%%%%%%%%%%%%%%%%%%%%%%%%%%%%%%%%%%%%%%%%%%%%%%%%%%%%%%%%%%%%%%%%%%%%%
Now let us suppose that we have a section of $\Omega^{\otimes k}$ 
 whose representation by a
function does not depend on the choice of local coordinate, i.e., 
$g(\widetilde{t}_p) = f(\widetilde{t}_p)$, and
  $f(t_p) = f(\rho(t_p))(\rho'(t_p))^{\otimes k}$  
 for any change of variable $\rho(t_p)$.  
%%%%
When  we consider sections of $\Omega^{\otimes k}$ with values in a vector space that itself transforms
non-trivially under changes of coordinates,
 canonical sections may exist.
%%
%%%%%%%%%%%%%%%%%%%%%%%%%%%%%%%%%%%%%%%%%%%%%%%%%%%%%%%%%%%%%%%%%%%%%
%\begin{definition}
%%
  We call  
 $f(t_p)(dt_p)^{\otimes k}$ a canonical $k$-differential. 
%%
%\end{definition}
%%%%
%%
Let us denote by $\Omega_p$ the space of differentials on $D^\times_p$.  
%%%%
%%%%%%%%%%%%%%%%%%%%%%%%%%%%%%%%%%%%%%%%%%%%%%%%%%%%%%%%%%%%%%%%%%%%%%%
%%%%%%%%%%%%%%%%%%%%%%%%%%%%%%%%%%%%%%%%%%%%%%%%%%%%%%%%%%%%%%%%%%%%%%%
%%
In \cite{BZF} we find  the following lemma  
which we apply to $G_{t_p}$:  
%%
%%%%%%%%%%%%%%%%%%%%%%%%%%%%%%%%%%%%%%%%%%%%%%%%%%%%%%%%%%%%%%%%%%%%%
\begin{lemma}
 Given a linear map  
 $\rho : \mathcal K_p \to {\rm End} \left(  G_{t_p}  \right)$,
such that for any $\alpha \in G_{t_p}$ we have 
$\rho(\mathfrak m_p)^l \cdot \alpha = 0$,   
 for large enough $l$, where $\mathfrak m_p$ is the maximal ideal of $\Oo_p$ at $p$.  
Then
$\omega_p = \sum\limits_{n\in \Z}  
\rho(t^n_p)\; t_p^{-n-1} \; dt_p$,  
is a canonical ${\rm End} \left(  G_{t_p}  \right)$-valued differential on $D^\times_{t_p}$
  i.e., it is independent of the choice
of coordinate $t_p$. 
\end{lemma}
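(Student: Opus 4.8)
The plan is to characterise $\omega_p$ intrinsically through a residue pairing, so that the explicit dependence on $t_p$ in its defining series is seen to wash out. First I would verify that $\omega_p$ is a genuine $\mathrm{End}(G_{t_p})$-valued differential on $D^\times_{t_p}$: applied to a fixed $\alpha \in G_{t_p}$, the annihilation hypothesis ensures that $\rho(t_p^n)\cdot\alpha = 0$ for all sufficiently large $n$ (as $t_p^n$ lies in ever higher powers of $\mathfrak{m}_p$), so that $\omega_p\cdot\alpha = \sum_{n}\rho(t_p^n)\alpha\; t_p^{-n-1}\,dt_p$ has a pole of finite order and defines an element of $\Omega_p \otimes G_{t_p}$.

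The central step is the residue formula. For $f \in \mathcal{K}_p$ I would prove
\[
\Res_p\bigl(f\cdot\omega_p\bigr) = \rho(f).
\]
Writing $f = \sum_m c_m\, t_p^m$ and computing termwise, $\Res_p\bigl(t_p^m\,\rho(t_p^n)\, t_p^{-n-1}\,dt_p\bigr) = \rho(t_p^n)\,\delta_{m,n}$, so only the term $n=m$ survives and the identity follows by linearity of $\rho$, the interchange of $\Res_p$ with the sum being legitimate because, after pairing with any $\alpha$, the sum is finite by the previous step. Conversely, the same computation shows that the family $\{\Res_p(f\cdot\eta)\}_{f\in\mathcal{K}_p}$ determines an arbitrary finite-order $\mathrm{End}(G_{t_p})$-valued differential $\eta$ uniquely: if $\eta = \sum_k a_k\, t_p^k\,dt_p$ then $\Res_p(t_p^m\,\eta) = a_{-m-1}$ recovers every coefficient. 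Hence $\omega_p$ is the unique differential on $D^\times_{t_p}$ whose residue against $f$ equals $\rho(f)$ for all $f \in \mathcal{K}_p$.

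Next I would invoke the two facts that render this characterisation coordinate-free. The domain of $\rho$ is $\mathcal{K}_p$, the field of fractions of $\Oo_p$, which is attached to the point $p$ with no reference to a coordinate; thus $\rho(f)$ is intrinsic. Moreover, the residue $\Res_p$ of a $1$-form at $p$ is independent of the choice of local coordinate — the standard invariance of the residue of a differential, applied here coefficientwise to the $\mathrm{End}(G_{t_p})$-valued form. Consequently the defining constraints $\Res_p(f\cdot\omega_p) = \rho(f)$ are themselves coordinate-independent.

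To conclude, let $\widetilde{t}_p$ be a second formal coordinate at $p$ and set $\widetilde\omega_p = \sum_n \rho(\widetilde{t}_p^{\,n})\,\widetilde{t}_p^{\,-n-1}\,d\widetilde{t}_p$. By the residue formula — valid verbatim for any coordinate — both $\omega_p$ and $\widetilde\omega_p$ have residue $\rho(f)$ against every $f \in \mathcal{K}_p$, so the uniqueness established above forces $\omega_p = \widetilde\omega_p$. This is precisely the asserted independence of $\omega_p$ from the coordinate $t_p$. The only delicate point is the manipulation of the doubly-infinite sums: the smoothness hypothesis on $\rho$ is exactly what guarantees both that $\omega_p$ is well defined and that residue and summation commute, so I expect the bookkeeping around this finiteness condition — rather than any deep geometric input — to be the main obstacle to get right.
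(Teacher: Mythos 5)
Your proof is correct, and it takes essentially the same route as the paper's source: the paper itself gives no proof of this lemma (it imports it from \cite{BZF}), and the argument there is precisely your residue characterization --- $\omega_p$ is the unique ${\rm End}(G_{t_p})$-valued one-form satisfying ${\rm Res}_p(f\,\omega_p)=\rho(f)$ for all $f\in\mathcal{K}_p$, combined with the coordinate-invariance of the residue of a differential. Your treatment of the finiteness issues is also the right one: reading the hypothesis as $\rho(\mathfrak{m}_p^l)\cdot\alpha=0$ (clearly what is intended, despite the paper's typographical placement of the power) and pairing with a fixed $\alpha$ to truncate the doubly-infinite sums is exactly what legitimizes both the definition of $\omega_p$ and the interchange of residue and summation.
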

%%
%%%%%%%%%%%%%%%%%%%%%%%%%%%%%%%%%%%%%%%%%%%%%%%%%%%%%%%%%%%%%%%%%%%%%%%%%%%%%%%%%%%%%%%%%%%%%%%%%%
%%%%%%%%%%%%%%%%%%%%%%%%%%%%%%%%%%%%%%%%%%%%%%%%%%%%%%%%%%%%%%%%%%%%%%%%%%%%%%%%%%%%%%%%%%%%%%
\section{Rational functions with prescribed analytic behavior} 
\label{rational}
%% 
%%%%%%%%%%%%%%%%%%%%%%%%%%%%%%%%%%%%%%%%%%%%%%%%%%%%%%%%%%%%%%%%%%%%%%%%%%%%%%%%%%%%%%%%%
%%
In this section  the space of prescribed rational functions is defined as  rational  
functions with certain analytical and symmetric properties \cite{H2, H1}.   
Such rational functions depend implicitly on an infinite number of non-commutative parameters.  
%%
%%%%%%%%%%%%%%%%%%%%%%%%%%%%%%%%%%%%%%%%%%%%%%%%%%%%%%%%%%%%%%%%%%%%%%%%%%%%%%%%%%%%%%%%%%%%%%
%%%%%%%%%%%%%%%%%%%%%%%%%%%%%%%%%%%%%%%%%%%%%%%%%%%%%%%%%%%%%%%%%%%%%%%%%%%%%%%%%%%%%%%%%%%%%
\subsection{Rational functions originating from matrix elements}
\label{functional}
% 
%%%%%%%%%%%%%%%%%%%%%%%%%%%%%%%%%%%%%%%%%%%%%%%%%%%%%%%%%%%%%%%%%
%\begin{definition}
%%%%
Let $M$ be a complex manifold. Denote by  
 ${\bf p}_n$ be a set of $n$ points on $M$. We denote by ${\bf \U}_n$ a set of domains such that 
 ${\bf p}_n \in {\bf \U}_n$.  
Let ${\bf z}_{n}$ be  $n$ complex coordinates in $\U_n$ around origins ${\bf p}_n$. 
%%
%%
%%%%%%%%%%%%%%%%%%%%%%%%%%%%%%%%%%%%%%%%%%%%%%%%%%%%%%%%%%%%%%%%%%%%%%%%%%%%%%%%
In this paper we consider 
 meromorphic functions 
 of several complex variables  
 defined on sets of open  
 domains of $M$ with local coordinates ${\bf z}_{n}$    
 which are extendable to rational functions  
 on larger domains on $M$. We denote such extensions by $R(f({\bf z}_{n}))$.   
%%
%\end{definition}
%%%%
%%
%%%%%%%%%%%%%%%%%%%%%%%%%%%%%%%%%%%%%%%%%%%%%%%%%%%%%%%%%%%%%%%%%%%%%%%%%%%%%%%%%%%%%%%%
%%%%
%\begin{definition}
%%%%
Denote by $F_{n}\mathbb C$ the  
configuration space of $n \ge 1$ ordered coordinates in $\mathbb C^{n}$, 
 $F_{n}\mathbb C=\{{\bf z}_n \in \mathbb C^{n}\;|\; z_{i} \ne z_{j}, i\ne j\}$. 
%%
%%
%\end{definition}
%%%%% 
%% 
%%%%%%%%%%%%%%%%%%%%%%%%%%%%%%%%%%%%%%%%%%%%%%%%%%%%%%%%%%%%%
%%%%%%%%%%%%%%%%%%%%%%%%%%%%%%%%%%%%%%%%%%%%%%%%%%%%%%%%%%%%%
In order to work with objects having coordinate invariant formulation,   
for a set of ${\bf G}_{{\bf z}_n}$-elements ${\bf g}_{n}$  
we consider converging 
rational 
functions $f({\bf x}_{n})\in G_{{\bf z}_n}$ of ${\bf z}_{n} \in F_{n}\mathbb C$. 
%% 
%%
%%%%%%%%%%%%%%%%%%%%%%%%%%%%%%%%%%%%%%%%%%%%%%%%%%%%%%%%%%%%%%%%%%%%%%%%%%%%%%%%%%%%%%
%%%%%%%%%%%%%%%%%%%%%%%%%%%%%%%%%%%%%%%%%%%%%%%%%%%%%%%%%%%%%%%%%%%%%%%%%%%%%%%%%%%%%%
%%
%%%%%%%%%%%%%%%%%%%%%%%%%%%%%%%%%%%%%%%%%%%%%%%%%%%%%%%%%%%%%%%%%%%%%%%%%%%%%%%%%%%%%%
%%
%\begin{definition}
%%
For an arbitrary fixed $\theta \in G^*_{{\bf p}_n}$,    
we call 
 a map linear in ${\bf g}_n$ and ${\bf z}_n$, 
%%
%%%%%%%%%%%%%%%%%%%%%%%%%%%%%%%%%%%%%%%%%%%%%%%%%%%%%%%%%%%%%%%%%%%%%%%%%%%%%%%%%%%%%%
%%
\begin{equation} 
F: {\bf x}_n 
\mapsto   
\label{deff}
    R(\theta, f({\bf x}_n 
)), 
\end{equation}
  a rational function (we denote by $R$) in ${\bf z}_n$   
with the only possible poles at 
$z_{i}=z_{j}$, $i\ne j$.  
Abusing notations, we denote 
\[
F({\bf x}_n )= R\left(\theta, f({\bf x}_n)\right). 
\] 
%%
%\end{definition}
%%
%%
%%%%%%%%%%%%%%%%%%%%%%%%%%%%%%%%%%%%%%%%%%%%%%%%%%%%%%%%
%\begin{definition}
%%
%% 
We define  left action of the permutation group $S_{n}$ on $F({\bf z}_n)$ 
by
\[
\nc{\bfzq}{{\bf z}_n}
\sigma(F)({\bf x}_n)=F\left({\bf g}_n, {\bf z}_{\sigma(i)} \; {\bf dz}_{\sigma(i)}\right).  
\]
%%
%\end{definition}
%%
%%
%%%%%%%%%%%%%%%%%%%%%%%%%%%%%%%%%%%%%%%%%%%%%%%%%%%%%%%%%%%%%%%%%%%%%%%%%%%%%%%%%%%%%%%%%%%%%%%%%%
%%%%%%%%%%%%%%%%%%%%%%%%%%%%%%%%%%%%%%%%%%%%%%%%%%%%%%%%%%%%%%%%%%%%%%%%%%%%%%%%%%%%%%%%%%%%%%%%%%
\subsection{Conditions on rational functions} 
Let ${\bf z}_n \in F_{n}\C$.  
Denote by $T_G$ the translation operator \cite{H2}. 
We define now extra conditions on rational functions leading to the definition of restricted 
rational functions. 
%%
%%%%%%%%%%%%%%%%%%%%%%%%%%%%%%%%%%%%%%%%%%%%%%%%%%%%%%%%%%%%%%%%%%%%%%%%%%%%%%%%%%%%%%%%
%%%%%%%%%%%%%%%%%%%%%%%%%%%%%%%%%%%%%%%%%%%%%%%%%%%%%%%%%%%%%%%%%%%%%%%%%%%%%%%%%%%%%%%%
%\begin{definition}
%%
Denote by $(T_G)_i$ the operator acting on the $i$-th entry.
 We then define the action of partial derivatives on an element $F({\bf x}_n)$  
\begin{eqnarray}
\label{cond1}
\partial_{z_i} F({\bf x}_n)  &=& F((T_G)_i \; {\bf x}_n),   
\nn
\sum\limits_{i \ge 1} \partial_{z_i}  F({\bf x}_n)  
&=&  T_{G} F({\bf x}_n),   
\end{eqnarray}
%%
%\end{definition}
%%
and call it $T_{G}$-derivative property. 
%%
%%
%%%%%%%%%%%%%%%%%%%%%%%%%%%%%%%%%%%%%%%%%%%%%%%%%%%%%%%%%%%%%%%%%%%%%%
%%%%%%%%%%%%%%%%%%%%%%%%%%%%%%%%%%%%%%%%%%%%%%%%%%%%%%%%%%%%%%%%%%%%%%
%%
%\begin{definition} 
%%
%%
For   $z \in \C$,  let 
\begin{eqnarray}
\label{ldir1}
 e^{zT_G} F ({\bf x}_n)   
 = F({\bf g}_n, ({\bf z}_n +z)\; {\bf dz}_n ). 
\end{eqnarray}
%%
%%%%%%%%%%%%%%%%%%%%%%%%%%%%%%%%%%%%%%%%%%%%%%%%%%%%%%%%%%%%%%%%%%%%%%%%%%%%%%%%%%%%%%%%%%%%
 Let 
 ${\rm Ins}_i(A)$ denotes the operator of multiplication by $A \in \C$ at the $i$-th position. Then we define   
\begin{equation}
\label{expansion-fn}
F\left({\bf g}_n, {\rm Ins}_i(z) \; {\bf z}_n \; {\bf dz}_n\right)=  
F\left( {\rm Ins}_i (e^{zT_G}) \; {\bf x}_n\right),  
\end{equation}
are equel as power series expansions in $z$, in particular, 
 absolutely convergent
on the open disk $|z|<\min_{i\ne j}\{|z_{i}-z_{j}|\}$.  
%%
%\end{definition}
%%
%%
%%%%%%%%%%%%%%%%%%%%%%%%%%%%%%%%%%%%%%%%%%%%%%%%%%%%%%%%%%%%%%%%%%%%%%%%%%%%%%%
%%%%%%%%%%%%%%%%%%%%%%%%%%%%%%%%%%%%%%%%%%%%%%%%%%%%%%%%%%%%%%%%%%%%%%%%%%%%%%%%%
%%%%%%%%%%%%%%%%%%%%%%%%%%%%%%%%%%%%%%%%%%%%%%%%%%%%%%%%%%%%%%%%%%%%%%%%%%%%%%%
%%
%\begin{definition}
%%
A rational function has $K_G$-property   
if for $z\in \C^{\times}$ satisfies 
$(z\;{\bf  z}_n) \in F_{n}\C$,  
\begin{eqnarray}
\label{loconj}
z^{K_G } F ({\bf x}_n) =  
 F \left(z^{K_G} {\bf g}_n, 
 z\; {\bf z}_n \; {\bf dz}_n\right). 
\end{eqnarray}
%%
%\end{definition}
%% 
%%%%%%%%%%%%%%%%%%%%%%%%%%%%%%%%%%%%%%%%%%%%%%%%%%%%%%%%%%%%%%%%%%%%%%%%%%%%%%%%%%%%%%%%%
%%%%%%%%%%%%%%%%%%%%%%%%%%%%%%%%%%%%%%%%%%%%%%%%%%%%%%%%%%%%%%%%%%%%%%%%%%%%%%%%%%%%%%%%%
%%
\subsection{Rational functions with prescribed analytical behavior}
In this subsection we give the definition of rational functions with prescribed analytical behavior
on a domain of complex manifold $M$ of dimension $n$.    
We denote by $P_{k}: G \to G_{(k)}$, $k \in \C$,      
the projection of $G$ on $G_{(k)}$.
For each element $g_i \in G$, and $x_i=(g_i, z)$, $z\in \C$ let us associate a formal series  
$W_{g_i}(z)= W(x_i)=  
\sum\limits_{k \in \C }  g_{i} \; z^{k} \; dz $, $i \in \Z$. 
%%
%%%%%%%%%%%%%%%%%%%%%%%%%%%%%%%%%%%%%%%%%%%%%%%%%%%%%%%%%%%%%%%%%%%%%%%%%%%%%%
%%
  Following \cite{H1}, we formulate 
%%
%%%%%%%%%%%%%%%%%%%%%%%%%%%%%%%%%%%%%%%%%%%%%%%%%%%%%%%%%%%%%%%%%%%%%%%%
%%%%%%%%%%%%%%%%%%%%%%%%%%%%%%%%%%%%%%%%%%%%%%%%%%%%%%%%%%%%%%%%%%%%%%%%
%\begin{definition}
%%
\label{defcomp}
%%
%%%%%%%%%%%%%%%%%%%%%%%%%%%%%%%%%%%%%%%%%%%%%%%%%%%%%%%%%%%%%%%%%%%%%%%%%
We assume that there exist positive integers $\beta(g_{l', i}, g_{l", j})$ 
depending only on $g_{l', i}$, $g_{l'', j} \in G$ for 
$i$, $j=1, \dots, (l+k)n $, $k \ge 0$, $i\ne j$, $ 1 \le l', l'' \le n$.  
 Let   
${\bf l}_n$ be a partition of $(l+ k)n     
=\sum\limits_{i \ge 1} l_i$, and $k_i=l_{1}+\cdots +l_{i-1}$. 
For $\zeta_i \in \C$,  
define 
$h_i  
=F 
({\bf W}_{ { \bf g}_{ k_i+{\bf l}_i  }} ( 
 {\bf z}_{k_i + {\bf l}_i  }- \zeta_i ))$, 
%%
%\end{eqnarray}
%%
for $i=1, \dots, n$.
We then call a rational function $F$ satisfying properties \eqref{cond1}--\eqref{loconj}, 
a rational function with prescribed analytical behavior, if 
under the following conditions on domains, 
%%
%\[
%%
$|z_{k_i+p} -\zeta_{i}| 
+ |z_{k_j+q}-\zeta_{j}|< |\zeta_{i} -\zeta_{j}|$,   
%%
%\]
%%
for $i$, $j=1, \dots, k$, $i\ne j$, and for $p=1, 
\dots$,  $l_i$, $q=1$, $\dots$, $l_j$, 
the function   
%%
%%%%%%%%%%%%%%%%%%%%%%%%%%%%%%%%%%%%%%%%%%%%%%%%%%%%%%%%%%%%%%%%%%%%%%%5
%
%%
%%
 $\sum\limits_{ {\bf r}_n \in \Z^n}  
F( {\bf P_{r_{i}}  h_i}; (\zeta)_{l})$,     
is absolutely convergent to an analytically extension 
in ${\bf z}_{l+k}$, independently of complex parameters $(\zeta)_{l}$,
with the only possible poles on the diagonal of ${\bf z}_{l+k}$   
of order less than or equal to $\beta(g_{l',i}, g_{l'', j})$.   
 In addition to that, for ${\bf g}_{l+k}\in G$,  the series 
$\sum_{q\in \C}$  
$F( {\bf W(g_k}$, ${\bf P}_q ( {\bf W(g}_{l+k}, {\bf z}_k), {\bf z}_{ k + {\bf l} }))$,    
is absolutely convergent when $z_{i}\ne z_{j}$, $i\ne j$
$|z_{i}|>|z_{s}|>0$, for $i=1, \dots, k$ and 
$s=k+1, \dots, l+k$ and the sum can be analytically extended to a
rational function 
in ${\bf z}_{l+k}$ with the only possible poles at 
$z_{i}=z_{j}$ of orders less than or equal to 
$\beta(g_{l', i}, g_{l'', j})$. 
%%
%
%%
%\end{definition}
%%
%%%%%%%%%%%%%%%%%%%%%%%%%%%%%%%%%%%%%%%%%%%%%%%%%%%%%%%%%%%%%%%%%%%%%%%%%%%%%%%%%
%%%%
For $m \in \N$ and $1\le p \le m-1$, 
 let $J_{m; p}$ be the set of elements of 
$S_{m}$ which preserve the order of the first $p$ numbers and the order of the last 
$m-p$ numbers, that is,
$$J_{m, p}=\{\sigma\in S_{m}\;|\;\sigma(1)<\cdots <\sigma(p),\;
\sigma(p+1)<\cdots <\sigma(m)\}.$$
Let $J_{m; p}^{-1}=\{\sigma\;|\; \sigma\in J_{m; p}\}$.  
%%%
In addition to that, for some rational functions require the property: 
\begin{equation}
\label{shushu}
\sum_{\sigma\in J_{n; p}^{-1}}(-1)^{|\sigma|} 
\sigma( 
F ({\bf g}_{\sigma(i)}, {\bf z}_n) 
)=0. 
\end{equation}
%%
%%%%%%%%%%%%%%%%%%%%%%%%%%%%%%%%%%%%%%%%%%%%%%%%%%%%%%%%%%%%%%%%%%%%
%%%%%%%%%%%%%%%%%%%%%%%%%%%%%%%%%%%%%%%%%%%%%%%%%%%%%%%%%%%%%%%%%%%%
%% 
%%
Let us also introduce the following vector containing rational functions with properties  
described above. 
\begin{equation}
\label{setka}
\overline{F}({\bf x}_n)= \left[ 
F \left( {\bf g}_n, {\bf z}_n \; {\bf dz}_{{\it i}(n)} \right)  \right].    
\end{equation}
where ${\it i}(j)$, $j=1, \ldots, n$, are cycling permutations of $(1, \dots, n)$ starting with $j$. 
%%

%%
%%%%%%%%%%%%%%%%%%%%%%%%%%%%%%%%%%%%%%%%%%%%%%%%%%%%%%%%%%%%%%%%%%%%%
%%%%%%%%%%%%%%%%%%%%%%%%%%%%%%%%%%%%%%%%%%%%%%%%%%%%%%%%%%%%%%%%%%%%%
%%
Finally, we formulate 
%%
%\begin{definition}
%%
\label{poyma}
We define the space $\Theta \left(n, k, G_{ {\bf z}_n }, U\right)$ of prescribed rational functions 
as a space of vectors $\overline{F}_{n}({\bf x}_n)$ of the form \eqref{setka} of 
  complex $n$-variable restricted    
 rational functions 
 with prescribed analytical behavior 
on a $F_{n}\C$-domain $U \subset M$,  and   
satisfying 
 $T_G$- and $K_G$-properties \eqref{cond1}--%, \eqref{ldir1}, \eqref{expansion-fn}, 
\eqref{loconj},
  and \eqref{shushu}.  
%%
%\end{definition}
%%
%%%%%%%%%%%%%%%%%%%%%%%%%%%%%%%%%%%%%%%%%%%%%%%%%%%%%%%%%%%%%%%%%%%%%%%%%%%%%%%%%%%%%%%%%%%%%
%%%%%%%%%%%%%%%%%%%%%%%%%%%%%%%%%%%%%%%%%%%%%%%%%%%%%%%%%%%%%%%%%%%%%%%%%%%%%%%%%%%%%%%%%%%%%
\section{The bundle $\W_M$ of prescribed rational functions} 
%%%%%%%%%%%%%%%%%%%%%%%%%%%%%%%%%%%%%%%%%%%%%%%%%%%%%%%%%%%%%%%%%%%%%%%%%%%%%%%%%%%%%%%%%%%%%
%%
%%
\label{bundle}
%%
%%%%%%%%%%%%%%%%%%%
In this section we provide the construction of prescribed rational function bundle on $M$.   
%%%%
%%
%%%%%%%%%%%%%%%%%%%%%%%%%%%%%%%%%%%%%%%%%%%%%%%%%%%%%%%%%%%%%%%%%%%%%%%%%%%%%%%%%%%%
%%%%%%%%%%%%%%%%%%%%%%%%%%%%%%%%%%%%%%%%%%%%%%%%%%%%%%%%%%%%%%%%%%%%%%%%%%%%%%%%%%%%
%%
\subsection{Admissible Lie algebras} 
\label{algebra}
Let us assume that for an infinite-dimensional Lie algebra $\mathcal G$, 
the grading of $G_{t_p}$ is bounded from below by  
some subspace index $\kappa= \in \C$, i.e., 
 $G_{t_p} = \oplus_{i \ge {\rm Re} (\kappa) } G_{t_p, i}$, 
with finite dimensional $\dim G_{t_p, i} < \infty$ grading subspaces $G_{t_p, i}$.  
%%%%
%%
%%%%%%%%%%%%%%%%%%%%%%%%%%%%%%%%%%%%%%%%%%%%%%%%%%%%%%%%%%%%%%%%%%%%%%%%%%%%
%%%%%%%%%%%%%%%%%%%%%%%%%%%%%%%%%%%%%%%%%%%%%%%%%%%%%%%%%%%%%%%%%%%%%%%%%%%%
%% 
%%
 Then we have a filtration  
$G_{t_{p_j}, \le m} =\bigoplus\limits_{i \ge {\rm Re}(\kappa)}^m G_{t_{p_j}, i}$,    
of $G_{t_{p_j}}$ 
 by finite-dimensional ${\rm Aut}_{p_j}\; \Oo^{(1)}_j$-submodules, $1 \le j \le n$.   
%%
%%
%%%%%%%%%%%%%%%%%%%%%%%%%%%%%%%%%%%%%%%%%%%%%%%%%%%%%%%%%%%%%%%%%%%%%%%%%%%%%%%%%%%%%%%%%%%%%% 
%%
In addition to that, 
 we assume  that: $\mathcal G$  
  carries an action of ${\bf Der}_n \; {\bm \Oo}^{(1)}_n$;    
the element 
 $\left(- \partial_{t_p}\right)$      
acts as the translation operator 
  on $G_{t_p}$ semi-simply with integral
eigenvalues;  the Lie subalgebra $\left({\bf Der}_+\right)_n \; {\bm \Oo}^{(1)}_n$ acts locally nilpotently; 
and the operator $\left(- t_p \; \partial_{t_p}\right)$ provides a $\C$-grading.   
%%%%
%%%%%%%%%%%%%%%%%%%%%%%%%%%%%%%%%%%%%%%%%%%%%%%%%%%%%%%%%%%%%%%%%%%%%%%%%%%%%%%%%%%%%%%%%
%%%%%%%%%%%%%%%%%%%%%%%%%%%%%%%%%%%%%%%%%%%%%%%%%%%%%%%%%%%%%%%%%%%%%%%%%%%%%%%%%%%%%%%%%
%%
Finally, 
 we  assume also that 
the action of the Lie algebras ${\bf Der}_n\; {\bm \Oo}^{(1)}_n$ on $G_{t_{ {\bf p}_n} }$  
 can be exponentiated to an action of the
group ${\bf Aut}_n\; {\bm \Oo}^{(1)}_n$.  
%%%%
%%
%%%%%%%%%%%%%%%%%%%%%%%%%%%%%%%%%%%%%%%%%%%%%%%%%%%%%%%%%%%%%%%%%%%%%%%%%%%%%%%%%%%%%%%%%%%%
%%%%%%%%%%%%%%%%%%%%%%%%%%%%%%%%%%%%%%%%%%%%%%%%%%%%%%%%%%%%%%%%%%%%%%%%%%%%%%%%%%%%%%%%%%%%
%\begin{definition}
%%
\label{admissible} 
We call $\mathcal G$ subject to the assumptions of this subsection 
an admissible Lie algebra.   
%%
%\end{definition}
%%%%
%%%%%%%%%%%%%%%%%%%%%%%%%%%%%%%%%%%%%%%%%%%%%%%%%%%%%%%%%%%%%%%%%%%%%%%%%%%%%%%%%%%%%%%
%%%%%%%%%%%%%%%%%%%%%%%%%%%%%%%%%%%%%%%%%%%%%%%%%%%%%%%%%%%%%%%%%%%%%%%%%%%%%%%%%%%%%%%
%%
\subsection{Torsors and twists under groups of automorphisms}
We now explain how to collect elements of the space $\Theta\left(n, k, G_{ {\bf z}_n }, U   \right)$ of 
prescribed rational functions 
into an intrinsic object 
on a collection of abstract discs. 
%%%%
%%%%%%%%%%%%%%%%%%%%%%%%%%%%%%%%%%%%%%%%%%%%%%%%%%%%%%%%%%%%%%%%%%%%%%%%%%%%%%%%%%%%%%%%%%%%%%%%%%%
%%
%%
 We consider a configuration of $n$-points ${\bf p}_n$ on a complex curve $M$ lying in non-intersecting 
local disks, and we assume that at each point of ${\bf p}_n$ 
a coordinate changes independently of changing of coordinates 
on other disks. 
%%%%
%%%%
%%%%%%%%%%%%%%%%%%%%%%%%%%%%%%%%%%%%%%%%%%%%%%%%%%%%%%%%%%%%%%%%%%%%%%%%%%%%%%%%%%%
%%
%%
Therefore, the general element of the group of independent automorphisms of coordinates of $n$ points on $M$ 
${\bf Aut}_n\; {\bm \Oo}^{(1)}_{{\bf p}_n}$ 
has the form 
${\bf t}_{ {\bf p}_n } \mapsto \left({\bm \rho}_j ( {\bf t}_{{\bf p}_j})\right)_n$, $1 \le j \le n$.    
%%%%
%%
%%%%%%%%%%%%%%%%%%%%%%%%%%%%%%%%%%%%%%%%%%%%%%%%%%%%%%%%%%%%%%%%%%%%%%%%%%%%%%%%%%%%%%%%%%%%%
%%%%%%%%%%%%%%%%%%%%%%%%%%%%%%%%%%%%%%%%%%%%%%%%%%%%%%%%%%%%%%%%%%%%%%%%%%%%%%%%%%%%%%%%%%%%%
%%
 We recall here the notion of a torsor we  use to prove the independence 
of the choice of coordinates 
for prescribed rational functions. 
%%
%%%%%%%%%%%%%%%%%%%%%%%%%%%%%%%%%%%%%%%%%%%%%%%%%%%%%%%%%%%%%%%%%%%%%%%%%%%%%%%%%%%%%%%%%%%%%%%%%
%%
%\begin{definition}
%%
\label{torsor}
Let $\mathfrak G$ be a group, and $X$ a non-empty set. 
Then $X$ is called a $\mathfrak G$-torsor if it is equipped with a simply transitive right action of $\mathfrak G$,
i.e., given $\xi$, $\widetilde \xi \in X$, there exists a unique $h \in \mathfrak G$ such that 
$\xi \cdot h = \widetilde \xi$, 
where for $h$, $\widetilde{h} \in \mathfrak G$ the right action is given by 
$\xi \cdot (h \cdot \widetilde{h}) = (\xi \cdot  h) \cdot \widetilde{h}$. 
The choice of any $\xi \in X$ allows us to identify $X$ with $\mathfrak G$ by sending
 $\xi  \cdot h$ to $h$.
%%
%\end{definition}
%%%%
%%%%%%%%%%%%%%%%%%%%%%%%%%%%%%%%%%%%%%%%%%%%%%%%%%%%%%%%%%%%%%%%%%%%%%%%%%%%%%%%%%%%%%%%%%%%%%%%%%%%
Applying the definition of a group twist to the group  ${\rm Aut}\; \Oo^{(1)}$ and 
 its module $G_{z}$ we obtain 
%%%%
%%%%%%%%%%%%%%%%%%%%%%%%%%%%%%%%%%%%%%%%%%%%%%%%%%%%%%%%%%%%%%%%%%%%%%%%%%%%%%%%%%%
%%%%%%%%%%%%%%%%%%%%%%%%%%%%%%%%%%%%%%%%%%%%%%%%%%%%%%%%%%%%%%%%%%%%%%%%%%%%%%%%%%%
%\begin{definition}
%%
Given a ${\rm Aut}\; \Oo^{(1)}$-module $G_{z}$ and a ${\rm Aut}\; \Oo^{(1)}$-torsor $X$,  
one defines the $X$-twist of $G_z$ as the set 
\[
\V_X = G_z \; {{}_\times \atop      {}^{  {\rm Aut} \; \Oo^{(1)}  }     }X  
=  G_z \times  X/  \left\{ (g, a \cdot \xi) \sim  (ag, \xi) \right\}.
\]
for $\xi \in X$, $a \in {\rm Aut}\; \Oo^{(1)}$, and $g\in G_z$.   
%%
%\end{definition}
%%%%
%%%%%%%%%%%%%%%%%%%%%%%%%%%%%%%%%%%%%%%%%%%%%%%%%%%%%%%%%%%%%%%%%%%%%%%%%%%%%%%%5
%%
Given $\xi \in X$, we may identify $G_z$ with $\V_X$, 
by $g \mapsto (\xi, g)$.  
This identification depends
on the choice of $\xi$.
 Since ${\rm Aut}\; \Oo^{(1)}$ acts on $G_z$ by linear operators, the vector space 
structure induced by the above identification does not depend on the choice of $\xi$, 
and $\V_X$ is canonically a vector space.
%%%%
%%%%%%%%%%%%%%%%%%%%%
%%
If one thinks of $X$ as a principal ${\rm Aut}\; \Oo^{(1)}$-bundle 
over a point, then $\V_X$ is simply the associated vector bundle corresponding to $G_z$.  
 Any structure on $G_z$ (such as a bilinear pairing or multiplicative structure) that
is preserved by ${\rm Aut}\; \Oo^{(1)}$ will be inherited by $\V_X$.
%%

%%%%%%%%%%%%%%%%%%%%%%%%%%%%%%%%%%%%%%%%%%%%%%%%%%%%%%%%%%%%%%%%%%%%%%%%%%%%%%%%%%%%%%%%%%%%%%
%%%%%%%%%%%%%%%%%%%%%%%%%%%%%%%%%%%%%%%%%%%%%%%%%%%%%%%%%%%%%%%%%%%%%%%%%%%%%%%%%%%%%%%%%%%%%%
%%
%% 
Now we wish to attach to any disc a certain twist $\V_{t_p}$ 
of $G_{t_p}$, so that $G_{t_p}$ is attached to the standard disc, 
and for any coordinate $t_p$ on $D_p$
we have an isomorphism
\begin{equation}
\label{isomo}
i_{t_p, p} : G_{t_p} \; \widetilde{\to} \;  \V_{t_p}.
\end{equation}
%%%%
%%
We then associate to elements of $G_{t_p}$ sections of some bundles on $D^\times_{t_p}$. 
The system of isomorphisms $i_{t_p, p}$ should satisfy certain compatibility condition. 
%%%%
 Namely, if $t_p$ and 
$\widetilde{t}_p$ are two coordinates on $D_p$ such that $\widetilde{t}_p = \rho(t_p)$, then we obtain
an automorphism 
$(i^{-1}_{\widetilde{t}_p, p} \circ i_{t_p, p})$  
 of $G_{t_p}$.  
 The condition is that the assignment
$\rho (z) \mapsto  i^{-1}_{\widetilde{t}_p, p} \circ i_{t_p, p}$,  
defines a representation on $G_{t_p}$ of the group ${\rm Aut}\; \Oo^{(1)}$ of changes of coordinates.
 If this condition is satisfied, then $\V_{t_p}$ is canonically identified with the twist of $G_{t_p}$ by the 
 ${\rm Aut} \; \Oo^{(1)}$-torsor of formal coordinates at $p$.  
%%%% 

%%%%%%%%%%%%%%%%%%%%%%%%%%%%%%%%%%%%%%%%%%%%%%%%%%%%%%%%%%%%%%%%%%%%%%%%%%%%%%%%%%%%
%%
In the next  subsection we will show, given 
 the space $\Theta\left(n, k, G_{ {\bf z}_n }, U\right)$  
of prescribed rational functions associated to an admissible 
Lie algebra,   
one can attach to it 
 a vector bundle $\W_M$ on any smooth curve $M$. 
 I.e.,  the elements of $\Theta\left(n, k, G_{ {\bf z}_n }, U\right)$    
give rise to a collection of  coordinate-independent 
sections $\F({\bf p}_n)$ of the dual bundle $\W^*_M$ in the neighborhoods of a collection of points ${\bf p}_n \in M$.   
%%%%

%%%%%%%%%%%%%%%%%%%%%%%%%%%%%%%%%%%%%%%%%%%%%%%%%%%%%%%%%%%%%%%%%%%%%%%%%%%%%%%%%%%%%%%%%%%%%%%%%%%%%%%%%%%%%%%%%%%
%%%%%%%%%%%%%%%%%%%%%%%%%%%%%%%%%%%%%%%%%%%%%%%%%%%%%%%%%%%%%%%%%%%%%%%%%%%%%%%%%%%%%
 The construction is based on the principal bundle for the group 
${\bf Aut}_n\; {\bm \Oo}^{(1)}_n$,  
 which naturally exists on an arbitrary smooth curve and on any collection 
${\bf D}_{ {\bf p}_n }$ of non-intersecting discs.  
%%%%
%%%%%%%%%%%%%%%%%%%%%%%%%%%%%%%%%%%%%%%%%%%%%%%%%%%%%%%%%%%%%%%%%%%%%%%%%%%%%%%%%%%%%%%%%%%%%%%%%%%%%%%%%%%%%
%%
We denote by ${\it Aut}_{ {\bf p}_n }$  the set of all 
coordinates ${\bf t}_{{\bf p}_n}$ on disks ${\bf D}_{ {\bf p}_n }$,   
centered at points ${\bf p}_n$. 
%%%%
%%
It comes equipped with a natural right action of the group of automorphisms 
 ${\bf Aut}_n \; {\bm \Oo}^{(1)}_n$.    
 If $t_{p_i} \in {\it Aut}_{p_i}$, 
and $\rho(z_i) \in {\rm Aut}_i\; \Oo^{(1)}_i$, then $\rho_i(  t_{p_i} ) \in {\it Aut}_{p_i}$.  
%%%%
%%
 Furthermore, as it was shown in \cite{BZF} that 
$(\rho_i * \mu_i)( t_{p_i} ) = \mu_i(  \rho_i(t_{p_i})  )$,   
for $1 \le i \le n$, 
it  defines a right simply transitive action of ${\rm Aut}_i\; \Oo^{(1)}_i$  
 on ${\it Aut}_{p_i}$. 
%%%%%
%%  
%%%%%%%%%%%%%%%%%%%%%%%%%%%%%%%%%%%%%%%%%%%%%%%%%%%%%%%%%%%%%%%%%%%%%%%%%%%%%%%%
Next we ave 
%%
%%%%%%%%%%%%%%%%%%%%%%%%%%%%%%%%%%%%%%%%%%%%%%%%%%%%%%%%%%%%%%%%%%%%%%%%%%%%%%%%
%%
\begin{lemma}
The group ${\bf Aut}_n\; {\bm \Oo}^{(1)}_n$
 acts naturally on ${\it Aut}_{{\bf p}_n}$,  
 and is a ${\bf Aut}_n\; {\bm \Oo}^{(1)}_n$-torsor.  
\end{lemma}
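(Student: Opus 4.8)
The plan is to reduce the $n$-point assertion to the single-point case already recorded immediately above the lemma, and then to invoke the elementary fact that a product of torsors under a product of groups is again a torsor. First I would observe that, because the points ${\bf p}_n$ are chosen to lie in pairwise non-intersecting discs and a coordinate is selected on each disc independently, the set of all coordinate systems factors as a direct product
\[
{\it Aut}_{ {\bf p}_n } = {\it Aut}_{p_1} \times \cdots \times {\it Aut}_{p_n},
\]
matching the decomposition ${\bf Aut}_n\; {\bm \Oo}^{(1)}_n = {\rm Aut}_1\; \Oo^{(1)}_1 \times \cdots \times {\rm Aut}_n\; \Oo^{(1)}_n$ of the acting group already recorded in the introduction. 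I would then define the right action componentwise by
\[
\left( t_{p_1}, \ldots, t_{p_n} \right) \cdot \left( \rho_1, \ldots, \rho_n \right) = \left( \rho_1(t_{p_1}), \ldots, \rho_n(t_{p_n}) \right),
\]
so that naturality of the action on ${\it Aut}_{ {\bf p}_n }$ is immediate from naturality of each of the factor actions established in \cite{BZF}.

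Next I would verify the two torsor axioms of Definition \ref{torsor} factor by factor. Associativity $\xi \cdot (h \cdot \widetilde h) = (\xi \cdot h) \cdot \widetilde h$ holds coordinate by coordinate: on the $i$-th factor it is exactly the composition rule $(\rho_i * \mu_i)( t_{p_i} ) = \mu_i( \rho_i( t_{p_i} ) )$ recalled just above the lemma. Simple transitivity is likewise checked factorwise: given two systems $( t_{p_1}, \ldots, t_{p_n} )$ and $( \widetilde t_{p_1}, \ldots, \widetilde t_{p_n} )$, the single-point result supplies, for each $i$, a unique $\rho_i \in {\rm Aut}_i\; \Oo^{(1)}_i$ with $\rho_i( t_{p_i} ) = \widetilde t_{p_i}$, and the tuple $(\rho_1, \ldots, \rho_n)$ is then the unique element of ${\bf Aut}_n\; {\bm \Oo}^{(1)}_n$ carrying the first system to the second. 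Non-emptiness of ${\it Aut}_{ {\bf p}_n }$ follows since each disc admits at least one coordinate.

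The only point requiring genuine care, and the place where the hypotheses actually enter, is the product decomposition of the first paragraph: it is precisely the assumption that the discs ${\bf D}_{ {\bf p}_n }$ are non-intersecting and that coordinates vary independently at each point that decouples the factors, so that a change of coordinate near $p_i$ neither affects nor constrains the coordinate near $p_j$ for $j \ne i$. Were the discs allowed to overlap, this factorization would fail and the componentwise action would not be well defined. Once the independence is granted, the remaining verifications are the routine componentwise checks above, and the lemma follows.
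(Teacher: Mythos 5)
Your proof is correct and follows essentially the same route the paper intends: the paper states this lemma without an explicit proof, relying on the single-point facts recorded in the paragraph immediately above it --- the action $\rho_i(t_{p_i})$, the composition rule $(\rho_i * \mu_i)(t_{p_i}) = \mu_i(\rho_i(t_{p_i}))$ from \cite{BZF}, and the resulting simple transitivity of each ${\rm Aut}_i\; \Oo^{(1)}_i$ on ${\it Aut}_{p_i}$ --- and your componentwise argument is precisely the omitted assembly of those facts. What you add beyond the paper, namely the explicit factorization ${\it Aut}_{{\bf p}_n} = {\it Aut}_{p_1} \times \cdots \times {\it Aut}_{p_n}$ justified by the non-intersecting discs and independent coordinate changes, together with the non-emptiness check required by Definition~\ref{torsor}, is exactly the detail needed to make the paper's implicit argument rigorous.
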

%%%%
%%%%%%%%%%%%%%%%%%%%%%%%%%%%%%%%%%%%%%%%%%%%%%%%%%%%%%%%%%%%%%%%%%%%%%%%%%%%%%%%%%%
%%
Thus, we can define the following twist.
%%%%%%%%%%%%%%%%%%%%%%%%%%%%%%%%%%%%%%%%%%%%%%%%%%%%%%%%%%%%%%%%%%%%%%%%%%%%%%%%%%%%%%%%%%%%%%%%%
%%
%%
%\begin{definition}
%%
\label{twist}
We can introduce the ${\bf Aut}_n \; {\bm \Oo}^{(1)}_n$-twist of $G_{{\bf p}_n}$ 
\[
\V_{{\bf p}_n}= G_{{\bf p}_n}   \; 
{  {}_\times \atop      {}^{   {\bf Aut}_n \; {\bm \Oo}^{(1)}_n   }}
  \;{\it Aut}_{{\bf p}_n}.    
\]
%%%%
%%
 The original definition was given in 
\cite{BD, Wi}. 
%%%%
%%%%%%%%%%%%%%%%%%%%%%%%%%%%%%%%%%%%%%%%%%%%%%%%%%%%%%%%%%%%%%%%%%%%%%%%%%%%%%%%%%%%%%%%%%%%%%%%
%%
For each set of formal coordinates ${\bf t}_{{\bf p}_n}$ at ${\bf p}_n$, and ${\bf g}_n \in {\bf G}_{{\bf z}_n}$, 
 any element of the twist $\V_{ {\bf p}_n }$   
 may be written uniquely as a
pair 
 $\left({\bf g}_n, {\bf t}_{ {\bf p}_n } \right)$.  
%%
%%  
%\end{definition}
%%%%
%%%%%%%%%%%%%%%%%%%%%%%%%%%%%%%%%%%%%%%%%%%%%%%%%%%%%%%%%%%%%%%%%%%%%%%%%%%%%%%%%%%%%%%%%%%%%%%%%%%%%%
%%%%%%%%%%%%%%%%%%%%%%%%%%%%%%%%%%%%%%%%%%%%%%%%%%%%%%%%%%%%%%%%%%%%%%%%%%%%%%%%%%%%%%%%%%%%%%%%%%%%%%
\subsection{Definition of prescribed rational functions bundle}
Now let us formulate the definition of fiber bundle associated 
through vectors of elements 
$F({\bf x}_n)$ with ${\bf x}_n=({\bf g}_n, {\bf t}_{{\bf p}_n})$ 
to 
the space 
$\Theta \left(n, k, G_{ {\bf t}_{{\bf p}_n} } {\bf D}_{ {\bf t}_{ {\bf p}_n } }\right)$
of prescribed rational functions 
on any set of standard disks ${\bf D}_{ {\bf t}_{ {\bf p}_n } }$ 
 around points ${\bf p}_n$ 
with local coordinates ${\bf t}_{ {\bf p}_n }$. 
%%
%%%%%%%%%%%%%%%%%%%%%%%%%%%%%%%%%%%%%%%%%%%%%%%%%%%%%%%%%%%%%%%%%%%%%%
%%
For that purpose  
 we involve the notion of a principal bundle  
for the group ${\bf Aut}_{{\bf p}_n}\; {\bm \Oo}^{(1)}_n$  
%%%
 which naturally exists on an arbitrary smooth curve and on any disc.  
%%%%
%%%%%%%%%%%%%%%%%%%%%%%%%%%%%%%%%%%%%%%%%%%%%%%%%%%%%%%%%%%%%%%%%%%%%%%%%%%%%%%%%%%%%%%%%%%%%%%%%%%%

%%%%%%%%%%%%%%%%%%%%%%%%%%%%%%%%%%%%%%%%%%%%%%%%%%%%%%%%%%%%%%%%%%%%%%%%%%%%%%%%%%%%%%%%%%%%%%%%%%%%%%%%%%%%
%%
For the fiber space 
provided by vectors of elements $f({\bf x}_n) \in G_{  {\bf t}_{{\bf p}_n } }$,  
using the property of prescribed rational functions
we form a principal ${\bf Aut}_n \; {\bm \Oo}_n$-bundle,  which is 
  a fiber bundle $\W_M|_{ {\bf D}_{ {\bf t}_{ {\bf p}_n } }}$ defined by trivializations 
\[
i_{ {\bf t}_{ {\bf p}_n } }: \overline{F} ({\bf x}_n)=
 \left[   (\theta, f({\bf x}_n)  )  \right] \to {\bf D}_{ {\bf t}_{ {\bf p}_n } },  
\]
 together with a continuous 
 right action 
\[
 \overline F({\bf x}_n) \times {\bf Aut}_n \; {\bm \Oo}^{(1)}_n \to \overline F({\bf x}_n),  
\] 
such that ${\bf Aut}_n \; {\bm \Oo}^{(1)}_n$   
preserves  $\overline F({\bf x}_n)$,  
 i.e.,  $\zeta$, $\zeta.a$ are sections of $\W_M|_{ {\bf D}_{ {\bf t}_{ {\bf p}_n } }}$
 for all $a \in {\bf Aut}_n \; {\bm \Oo}^{(1)}_n$,  
and acts freely and transitively, i.e., the map $a \mapsto \zeta.a$ is a homeomorphism. 
%%%%
%%
%%%%%%%%%%%%%%%%%%%%%%%%%%%%%%%%%%%%%%%%%%%%%%%%%%%%%%%%%%%%%%%%%%%%%%%%%%%%%%%%%%%%%%%%%
Thus, we have \cite{BZF} 
%%
%%
%%%%%%%%%%%%%%%%%%%%%%%%%%%%%%%%%%%%%%%%%%%%%%%%%%%%%%%%%%%%%%%%%%%%%%%%%%%%%%%%%%%%%%
\begin{lemma}
 For $1 \le i \le n$, the projection ${\it Aut}_{{\bf p}_n} \rightarrow M$ is a principal 
${\bf Aut}_{{\bf p}_n}\; {\bm \Oo}_n$-bundle. 
The fiber of this bundle at points ${\bf p}_n$ is the  
${\bf Aut}_n \; {\bm \Oo}^{(1)}_n$-torsor ${\it Aut}_{{\bf p}_n}$.  
\end{lemma}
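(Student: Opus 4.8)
The plan is to reduce the $n$-point assertion to the one-point case, supply local triviality there by means of shifted coordinates on a chart, and then recover the product bundle over the locus of non-intersecting discs, reading everything in the projective-limit sense forced by the infinite-dimensionality of ${\rm Aut}\; \Oo^{(1)}$.

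First I would dispose of the case $n=1$. The identity $(\rho * \mu)(t_p)=\mu(\rho(t_p))$ recalled from \cite{BZF} makes the right action of ${\rm Aut}\; \Oo^{(1)}$ on ${\it Aut}_p$ simply transitive; simple transitivity yields at once that the action is free (all stabilizers are trivial) and transitive on each fiber, which are two of the three defining conditions of a principal bundle. The remaining condition, local triviality of the projection ${\it Aut}_p \to M$ whose fiber over $p$ is the set of topological generators of the maximal ideal $\mathfrak m_p$ of $\Oo_p$, I would establish using a local coordinate $z$ on a chart $U \subset M$: for each $p \in U$ the series $z - z(p)$ is a formal coordinate centered at $p$, so $p \mapsto z - z(p)$ is a section of ${\it Aut}_p|_U \to U$, and by simple transitivity this section induces a trivialization ${\it Aut}_p|_U \simeq U \times {\rm Aut}\; \Oo^{(1)}$.

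Next I would assemble the $n$-point statement. Since the points ${\bf p}_n$ lie in pairwise non-intersecting discs and coordinates are changed independently at each point, the total space ${\it Aut}_{{\bf p}_n}$ is the restriction of the external product of $n$ copies of the one-point space ${\it Aut}_p$ to the locus of $n$-tuples lying in disjoint discs, namely the complement of the diagonals in $M^{n}$ (locally modeled on $F_n\C$). The structure group is the product ${\bf Aut}_n\; {\bm \Oo}^{(1)}_n = {\rm Aut}_1\; \Oo^{(1)}_1 \times \cdots \times {\rm Aut}_n\; \Oo^{(1)}_n$ acting factorwise. The one-point trivializations multiply to local trivializations of the product, so freeness, fiberwise transitivity and local triviality are inherited; by the preceding torsor lemma the fiber over ${\bf p}_n$ is exactly the ${\bf Aut}_n\; {\bm \Oo}^{(1)}_n$-torsor ${\it Aut}_{{\bf p}_n}$, which gives the fiber identification claimed.

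The main obstacle is the infinite-dimensionality of ${\rm Aut}\; \Oo^{(1)}$: this group of formal changes of variable $\rho(t)=a_1 t + a_2 t^2 + \cdots$ with $a_1 \ne 0$ is pro-algebraic, and ${\it Aut}_p$ is a pro-scheme over $M$, so \emph{principal bundle} and \emph{local triviality} must be interpreted as limits over finite jet orders. I would handle this by writing ${\it Aut}_p$ as the inverse limit over $N$ of the finite-dimensional bundles of $N$-jets of coordinates, each a principal bundle for the truncated group ${\rm Aut}^{(N)}\; \Oo^{(1)}$; local triviality holds at every finite level through the section $p \mapsto z - z(p)$ above, and the admissibility filtration of $G_{t_{p_j}}$ by finite-dimensional ${\rm Aut}_{p_j}\; \Oo^{(1)}_j$-submodules guarantees that the associated data pass consistently to the limit. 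The one point requiring genuine care is the compatibility of the level-$N$ transition functions with the truncation maps ${\it Aut}^{(N+1)} \to {\it Aut}^{(N)}$, which follows from the chain-rule form of $(\rho * \mu)$.
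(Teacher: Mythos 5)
Your proof is correct and is essentially the argument the paper relies on: the paper gives no proof of this lemma at all, simply citing \cite{BZF}, and your reconstruction — simple transitivity of the right action from $(\rho_i * \mu_i)(t_{p_i}) = \mu_i(\rho_i(t_{p_i}))$, local triviality via the section $p \mapsto z - z(p)$ over a coordinate chart, factorwise assembly of the $n$-point bundle over the locus of disjoint discs, and the inverse-limit (jet-level) interpretation forced by the pro-group structure of ${\rm Aut}\;\Oo^{(1)}$ — is precisely the standard proof in that reference. The only superfluous element is your appeal to the admissibility filtration of $G_{t_{p_j}}$ by finite-dimensional submodules, which concerns the associated bundle $\W_M$ rather than the principal ${\bf Aut}_n\;{\bm \Oo}^{(1)}_n$-bundle itself and is not needed for this lemma.
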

%%%%
%%%%%%%%%%%%%%%%%%%%%%%%%%%%%%%%%%%%%%%%%%%%%%%%%%%%%%%%%%%%%%%%%%%%%%%%%%%%%%%%%%%%%%%%%%%%%%%%%%%%%%%
%%%%%%%%%%%%%%%%%%%%%%%%%%%%%%%%%%%%%%%%%%%%%%%%%%%%%%%%%%%%%%%%%%%%%%%%%%%%%%%%%%%%%%%%%%%%%%%%%%%%%%%
As we observed above, we have a representation
 of the product of the group of autormorphisms
${\bf Aut}_n \; {\bm \Oo}^{(1)}_n$ 
on $G_{ {\bf t}_{ {\bf p}_n } }$.  
Then we obtain
%%
%%%%%%%%%%%%%%%%%%%%%%%%%%%%%%%%%%%%%%%%%%%%%%%%%%%%%%%%%%%%%%%%%%%%%%%%%%%%%
%%%%%%%%%%%%%%%%%%%%%%%%%%%%%%%%%%%%%%%%%%%%%%%%%%%%%%%%%%%%%%%%%%%%%%%%%%%%%
%\begin{definition}
%%
%%%%%%%%%%%%%%%%%%%%%%%%%%%%%%%%%%%%%%%%%%%%%%%%%%%%%%%%%%%%%%%%%%%%%%%%%%%%%5
%% 
%%
%%
Let $({\it Aut}_M)_n$ be the set of $n$-tuples of local coordinates ${\it Aut}_{ {\bf p}_n }$ 
all over the complex curve $M$.  
Given a finite-dimensional ${\bf Aut}_n \; {\bm \Oo}^{(1)}_n$-module $G_{i, {\bf p}_n}$,   
 let 
\[
\W_M|_{ {\bf D}_{ {\bf t}_{ {\bf p}_n } } }= G_{i, {\bf t}_{ {\bf p}_n } }  
{\times \atop \; {\bf Aut}_n \; {\bm \Oo}^{(1)}_n } \; ({\it Aut}_M)_n,    
\]
be the fiber bundle associated to $G_{ i, {\bf t}_{ {\bf p}_n } }$  and $({\it Aut}_M)_n$.  
%%%%
%%
%%%%%%%%%%%%%%%%%%%%%%%%%%%%%%%%%%%%%%%%%%%%%%%%%%%%%%%%%%%%%%%%%%%%%%%%%%%%%%%%%%%%%%%%%%%%%%%%%%%%
%%
Thus, $\W_M|_{ {\bf D}_{ {\bf t}_{ {\bf p}_n } }}$ is a bundle of finite rank over 
$M|_{ {\bf D}_{ {\bf t}_{ {\bf p}_n } }}$ whose fiber at a collection of points  
${\bf p}_n \in M$ is the vector $[f({\bf x}_n )]$.   
%%
%\end{definition}
%%%%
%%%%%%%%%%%%%%%%%%%%%%%%%%%%%%%%%%%%%%%%%%%%%%%%%%%%%%%%%%%%%%%%%%%%%%%%%%%%%%%%%%%%%%%%%%
%%%%%%%%%%%%%%%%%%%%%%%%%%%%%%%%%%%%%%%%%%%%%%%%%%%%%%%%%%%%%%%%%%%%%%%%%%%%%%%%%%%%%%%%%%
In a vicinity of every point of ${\bf p}_n$ on $M$ we can choose disks 
 ${\bf D}_{ {\bf p}_n }$  such that the bundle $\W_M$ over ${\bf D}_{ {\bf p}_n }$ is 
${\bf D}_{ {\bf p}_n } \times \F({\bf x}_n)$, where $\F({\bf x}_n)$ is a section of $\W_M$. 
%%%%
%%%%%%%%%%%%%%%%%%%%%%%%%%%%%%%%%%%%%%%%%%%%%%%%%%%%%%%%%%%%%%%%%%%%%%%%%%%%%%%%%%%%%%%%%%%
%%%%%%%%%%%%%%%%%%%%%%%%%%%%%%%%%%%%%%%%%%%%%%%%%%%%%%%%%%%%%%%%%%%%%%%%%%%%%%%%%%%%%%
%%
%%
The fiber bundle $\W_M$ 
with fiber $\left[f({\bf p}_n) \right]$ is a map 
$\W_M: \C^n  \rightarrow M$  where $\C^n$ is the total space 
of $\W_M$ and $M$ is its base space.  
%%%%
%%%%%%%%%%%%%%%%%%%%%%%%%%%%%%%%%%%%%%%%%%%%%%%%%%%%%%%%%%%%%%%%%%%%%%%%%%%%%%%%%%%%%%%%%%%%
%%%%%%%%%%%%%%%%%%%%%%%%%%%%%%%%%%%%%%%%%%%%%%%%%%%%%%%%%%%%%%%%%%%%%%%%%%%%%%%%%%%%%%%%%%%%%%
%% 
%%
 For every set of points ${\bf p}_n \in M$ with local disks  ${\bf D}_{ {\bf t}_{ {\bf p}_n }} $  
 $i_{ {\bf t}_{ {\bf p}_n }}^{-1}$ is homeomorphic to $ {\bf D}_{ {\bf t}_{ {\bf p}_n }}\times \C^n$. 
Namely, 
we have for 
$\left[ f( {\bf p}_n ) \right]:
 i_{   {\bf t}_{ {\bf p}_n }  }^{-1} \rightarrow  {\bf D}_{ {\bf t}_{ {\bf p}_n }} 
  \times \C^n$, 
that 
$\mathcal P \circ \left[  f (  {\bf p}_n  )    \right]    
= i_{    {\bf t}_{   {\bf p}_n  }    } 
|_{   i^{-1}_{     {\bf t}_{ {\bf p}_n }           } } 
  \left( {\bf D}_{    {\bf t}_{ {\bf p}_n }     }     ) \right) $,  
where $\mathcal P$ is the projection map on ${\bf D}_{  {\bf t}_{   {\bf p}_n   }  }$.   
%%
%%%%%%%%%%%%%%%%%%%%%%%%%%%%%%%%%%%%%%%%%%%%%%%%%%%%%%%%%%%%%%%%%%%%%%%%%%%%%%%%%%%%%%%%%%%%%%%%%%%
%%%%
Now we are able to formulate the definition of a prescribed rational functions bundle over a complex curve $M$.   
%% 
%%%%%%%%%%%%%%%%%%%%%%%%%%%%%%%%%%%%%%%%%%%%%%%%%%%%%%%%%%%%%%%%%%%%%%%%%%%%%%%%%%%%%%%%%%%%%%%%%%%%%%%%%%%%
%%%%%%%%%%%%%%%%%%%%%%%%%%%%%%%%%%%%%%%%%%%%%%%%%%%%%%%%%%%%%%%%%%%%%%%%%%%%%%%%%%%%%%%%%%%%%%%%%%%%%%%%%%%%
%\begin{definition}
%%
For an ${\bf Aut}_n\; {\bm \Oo}^{(1)}_n$-module $G_{ {\bf t}_{ {\bf p}_n } }$ 
which has a filtration by finite-dimensional submodules 
$G_{ {\bf t}_{ {\bf p}_n, i } }$, $i \ge 0$, we consider the directed 
inductive limit $\W_M$ of a system of finite rank bundles $\W^i_M$   
 on $M$ defined by embeddings 
 $\W^i_M \rightarrow 
\W^j_M$, 
for 
 $i \le j$, i.e., 
  $\W_M$ it as a fiber bundle of infinite
rank over $M$.  
%%%%
%%%%%%%%%%%%%%%%%%%%%%%%%%%%%%%%%%%%%%%%%%%%%%%%%%%%%%%%%%%%%%%%%%%%%5
%%
 Similarly, the dual bundle $\W^*_M$ is  
inverse system of bundles 
$(\W^i_M)^*$, $i \ge 0$, 
 and surjections 
$(\W^j_M)^* \rightarrow (\W^i_M)^*$,
for 
 $i \le  j$, 
  a projective limit of bundles of finite rank. 
%%
%\end{definition}
%%%%
%%%%%%%%%%%%%%%%%%%%%%%%%%%%%%%%%%%%%%%%%%%%%%%%%%%%%%%%%%%%%%%%%%%%%%%%%%%%%%%%
%%%%%%%%%%%%%%%%%%%%%%%%%%%%%%%%%%%%%%%%%%%%%%%%%%%%%%%%%%%%%%%%%%%%%%%%%%%%%%%%
%%
In the next subsection 
 we will identify sections $\F({\bf p}_n)$ of $\W_M|_ {{\bf D}^\times_{{\bf p}_n}}$   
with an operation which takes vector of elements of
 $F({\bf x}_n) \in \Theta\left(n, k, G_{ {\bf x}_n },  D^\times_{ {\bf  t}_{  {\bf p}_n} }  \right)$, 
and assigns to it a dual element.
%%%%
%%%%%%%%%%%%%%%%%%%%%%%%%%%%%%%%%%%%%%%%%%%%%%%%%%%%%%%%%%%%%%%%%%%%%%%%%%%%%%%%%%%%%%%%%%%%%%%%%%%
%%%%%%%%%%%%%%%%%%%%%%%%%%%%%%%%%%%%%%%%%%%%%%%%%%%%%%%%%%%%%%%%%%%%%%%%%%%%%%%%%%%%%%%%%%%%%%%%%%%
%%
\subsection{Explicit construction of canonical intrinsic setup for $\W_M$}
In order to be able to define a section $\overline{\F}(  {\bf p}_n  )$ 
 defined on abstract disks $D^\times_{  {\bf p}_n }$ 
in the coordinate independent 
description of the bundle $\W_M$, we have to associate in some way 
$\overline{\F}(  {\bf p}_n  )$ to $\overline{F}(  {\bf x}_n  )$. 
%%%%
%%
%%%%%%%%%%%%%%%%%%%%%%%%%%%%%%%%%%%%%%%%%%%%%%%%%%%%%%%%%%%%%%%%
Now let us give
%%%%%%%%%%%%%%%%%%%%%%%%%%%%%%%%%%%%%%%%%%%%%%%%%%%%%%%%%%%%%%%%%
%%
%%%%%%%%%%%%%%%%%%%%%%%%%%%%%%%%%%%%%%%%%%%%%%%%%%%%%%%%%%%%%%%%%%%%%%%%
%%%%%%%%%%%%%%%%%%%%%%%%%%%%%%%%%%%%%%%%%%%%%%%%%%%%%%%%%%%%%%%%%%%%%%%%
%\begin{definition}
%%
For each triple 
 ${\bf p}_n$,  ${\bf g}_n \in {\bf G}_{{\bf z}_n}$, and  
fixed $\theta \in G^*_{{\bf p}_n}$,   
 we define a intrinsic $\C^n$-valued meromorphic   
section $\overline{ \F}({\bf p}_n)$ 
of the bundle $\W^*_{ {\bf  D}_{ {\bf p}_n} }$ on the punctured discs 
${\bf D}^\times_{\bf p_n}$ by  
an operation
%%
%%%%%%%%%%%%%%%%%%%%%
\begin{equation}
\label{mordo}
 \left(\theta,  \;  {\bf g}_n, \; {\bf p}_n \right)  
 \mapsto \left( \theta, \F_{  i_{ {\bf p}_n } }\right),  
\end{equation}
%%
%%%%%%%%%%%%%%%%%%%%%%%%%%%%%%%%%%%%%%%%%%%%%%%%%%%%%%%%%%%%%%%%%%%%%%%%%%%%%%%%%%%%%%%%%%%%%%%%%%%%%%%%%%%%%%%
%%
assigning to a vector $\overline{\F}({\bf p}_n)$ of $\W_{ {\bf D}_{ {\bf p}_n   } }$  
an element 
of ${\bm{ \mathcal K}}_{  {\bf p}_n  }$
 (i.e., functions on ${\bf D}^\times_{  {\bf p}_n }$),
 defined by the $\W^*_{  {\bf D}_{  {\bf p}_n }}$-fiber
 $\F_{i_{ t_{ {\bf p}_n } } }\in G_{ {\bf p}_n }$. 
%%
%%
%\end{definition}
%%%%
%%%%%%%%%%%%%%%%%%%%%%%%%%%%%%%%%%%%%%%%%%%%%%%%%%%%%%%%%%%%%%%%%%%%%%%%%%%%%%%%%%%%%%%%%%%%%%%%%%%%%%%
%%%%%%%%%%%%%%%%%%%%%%%%%%%%%%%%%%%%%%%%%%%%%%%%%%%%%%%%%%%%%%%%%%%%%%%%%%%%%%%%%%%%%
%%
 We now formulate  
the main statement of this paper which is contained in the following proposition for   
 prescribed rational functions 
  bundle $\W_M$.   
%%
%%%%%%%%%%%%%%%%%%%%%%%%%%%%%%%%%%%%%%%%%%%%%%%%%%%%%%%%%%%%%%%%%%%%%%%%%%%%%%%
%%%%%%%%%%%%%%%%%%%%%%%%%%%%%%%%%%%%%%%%%%%%%%%%%%%%%%%%%%%%%%%%%%%%%%%%%%%%%
\begin{proposition}
\label{mainpro}
A $\C^n$-valued canonical (i.e., independent
of the choice of coordinates ${\bf t}_{  {\bf p}_n }$ on ${\bf D}^\times_{{\bf p}_n}$) 
section $\overline \F({\bf p}_n)$  of the bundle $\W^*_M|_{ {\bf D}^\times_{{\bf p}_n} }$ on  
of the $G_{ {\bf p}_n }$-valued fibers  
$\F_{ i_{ {\bf t}_{{\bf p}_n} }}$  defined by \eqref{isomo} on 
${\bf D}^\times_{{\bf p}_n}$ dual to $\W_M|_{D^\times_{ {\bf p}_n} }$ is given by the formula
%%%%
%%%%%%%%%%%%%%%%%%%%%%%%%%%%%%%%%%%%%%%%%%%%%%%%%%%%%%%%%%%%%%%%%%%%%%%%%%%%%%%%%%%%%%%%%
\begin{equation}
\label{mainfo}
\overline{\F}({\bf p}_n)= 
\left[ \left( {\bf y}_n,  \F_{ i_{ {\bf t}_{  {\bf p}_n  } } }({\bf h}_n))  
   \right) \right]   
= \left[ \left( \theta, f({\bf x}_n)  \right) \right]= \overline{F}({\bf x}_n), 
\end{equation}
%%
%%%%%%%%%%%%%%%%%%%%%%%%%%%%%%%%%%%%%%%%%%%%%%%%%%%%%%%%%%%%%%%%%%%%%%%%%%%%%%%%%%%%%%%%%%%%%%%%%%%%%%%%
%%
for ${\bf y}_n=(\theta, {\bf t}_{ {\bf p}_n} )$, 
$f({\bf x}_n) \in G_{ {\bf t}_{ { \bf p}_n }}$,   
 where ${\bf t}_{{\bf p}_n}$ are coordinates on ${\bf D}^\times_{{\bf p}_n}$, and ${\bf h}_n \in {\bf G}_{{\bf z}_n}$. 
\end{proposition}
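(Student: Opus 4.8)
The plan is to prove \propref{mainpro} in two stages: first to check that the operation \eqref{mordo}, read in a fixed coordinate system ${\bf t}_{{\bf p}_n}$, returns exactly the vector $\overline{F}({\bf x}_n)$ of \eqref{setka}; and second to show that the resulting section of $\W^*_M|_{{\bf D}^\times_{{\bf p}_n}}$ does not depend on that coordinate system. For the first stage I would unwind the twist of \defref{twist}. Every element of $\V_{{\bf p}_n}$ is represented uniquely by a pair $({\bf g}_n,{\bf t}_{{\bf p}_n})$, so fixing coordinates ${\bf t}_{{\bf p}_n}$ the isomorphism $i_{{\bf t}_{{\bf p}_n}}$ of \eqref{isomo} identifies the abstract fiber $\F_{i_{{\bf t}_{{\bf p}_n}}}$ with the concrete element $f({\bf x}_n)\in G_{{\bf t}_{{\bf p}_n}}$. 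Pairing against the fixed $\theta\in G^*_{{\bf p}_n}$ through the non-degenerate form yields, componentwise over the cyclic permutations ${\it i}(j)$ of \eqref{setka}, the scalars $F({\bf x}_n)=(\theta,f({\bf x}_n))$, which is precisely the middle equality of \eqref{mainfo}.

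For canonicity I would pass to a second coordinate system $\widetilde{\bf t}_{{\bf p}_n}={\bm\rho}({\bf t}_{{\bf p}_n})$, ${\bm\rho}\in{\bf Aut}_n\;{\bm\Oo}^{(1)}_n$, and compare the two readings of \eqref{mordo}. By the compatibility of the twist the two trivializations are intertwined by the operator $i^{-1}_{\widetilde{\bf t}_{{\bf p}_n}}\circ i_{{\bf t}_{{\bf p}_n}}$ representing ${\bm\rho}$, so the fiber element transforms by the exponentiated action of $\left({\it Der}_{0,j}\;{\bm\Oo}^{(1)}_j\right)_n$ on $G_{{\bf p}_n}$ supplied by \secref{algebra}; concretely this is the action assembled from the operators $e^{zT_G}$ and $z^{K_G}$ of \eqref{ldir1}--\eqref{loconj}. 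Simultaneously the differential factors ${\bf dz}_n$ carried by the arguments in \eqref{setka} transform by the Jacobian ${\bm\rho}'$. The heart of the argument is that these two transformations are mutually inverse, exactly as in the canonical-differential lemma of \cite{BZF} recalled above: the ${\rm End}(G_{{\bf p}_n})$-valued coefficients produced by the twist and the coordinate differentials transform oppositely, so the pairing $(\theta,f({\bf x}_n))$, and hence each component of $\overline{F}({\bf x}_n)$, is left invariant. The invariance of the non-degenerate bilinear form under ${\bf Aut}_n\;{\bm\Oo}^{(1)}_n$, noted after \defref{twist}, is what makes this cancellation compatible with the fixed choice of $\theta$.

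The main obstacle is to verify the compatibility condition stated just before \eqref{isomo}, namely that the assignment ${\bm\rho}\mapsto i^{-1}_{\widetilde{\bf t}_{{\bf p}_n}}\circ i_{{\bf t}_{{\bf p}_n}}$ defines an honest representation of ${\bf Aut}_n\;{\bm\Oo}^{(1)}_n$ on $G_{{\bf p}_n}$, and not merely a projective one; only then is $\V_{{\bf p}_n}$ canonically a vector bundle and the pairing genuinely coordinate-free. This is exactly where the admissibility of $\mathcal G$ (\defref{admissible}) is indispensable: the semisimplicity of the translation operator $-\partial_{t_p}$ with integral eigenvalues, the local nilpotence of $\left({\bf Der}_+\right)_n\;{\bm\Oo}^{(1)}_n$, and the filtration of $G_{{\bf t}_{{\bf p}_n}}$ by finite-dimensional ${\rm Aut}_{p_j}\;\Oo^{(1)}_j$-submodules together guarantee that $e^{zT_G}$ and $z^{K_G}$ act by well-defined, convergent operators on each filtered piece $G_{{\bf t}_{{\bf p}_n},\le m}$ and exponentiate consistently to a group action. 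Granting this representation property, the chain of equalities in \eqref{mainfo} holds intrinsically on the punctured discs ${\bf D}^\times_{{\bf p}_n}$, and $\overline{\F}({\bf p}_n)$ is the desired canonical section, which completes the proof of \propref{mainpro}.
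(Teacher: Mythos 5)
Your proposal is correct and follows essentially the same route as the paper: first a trivialization step that unwinds the twist, identifies the fiber $\F_{ i_{ {\bf t}_{{\bf p}_n} }}$ with $f({\bf x}_n)$ and pairs with $\theta$ to obtain the middle equality of \eqref{mainfo}, then coordinate-independence via the representation ${\bm \rho} \mapsto i^{-1}_{\widetilde{\bf t}_{ {\bf p}_n } } \circ i_{{\bf t}_{ {\bf p}_n } }$ of \eqref{kozel} and the cancellation between the transformation of the fiber and the Jacobian of the differentials, with admissibility guaranteeing that the exponentiated action is well defined. The paper packages this cancellation as Lemma \ref{nezc}, proved by expanding ${\bf d}\widetilde{\bf z}_n$ by the chain rule and acting with the exponential operators \eqref{hatrho} assembled from the $T_G$-property \eqref{ldir1}, and then invoking the multiplicativity Lemma \ref{podnik} from \cite{BZF} — precisely the ingredients (translation/grading action, honest rather than projective representation, invariance of the pairing) that you identify as the heart of the argument.
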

%%%%
%%%%%%%%%%%%%%%%%%%%%%%%%%%%%%%%%%%%%%%%%%%%%%%%%%%%%%%%%%%%%%%%%%%%%%%%%%%%%%%%%%%%%%%
%%%%%%%%%%%%%%%%%%%%%%%%%%%%%%%%%%%%%%%%%%%%%%%%%%%%%%%%%%%%%%%%%%%%%%%%%%%%%%%%%%%%%%%
%%
\begin{proof}
%%
%%%%%%%%%%%%%%%%%%%%%%%%%%%%%%%%%%%%%%%%%%%%%%%%%%%%%%%%%%%%%%%%%%%%%%%%%%%%%%
%%%%%%%%%%%%%%%%%%%%%%%%%%%%%%%%%%%%%%%%%%%%%%%%%%%%%%%%%%%%%%%%%%%%%%%%%%%%%%
%% 
%%
Now let us proceed with the explicit construction of $\F_{ i_{  { {\bf p}_n }  }}$.  
%%
%%%%
%%%%%%%%%%%%%%%%%%%%%%%%%%%%%%%%%%%%%%%%%%%%%%%%%%%%%%%%%%%%%%%%%%%%%%%%%%%%%%%
%%
By choosing coordinates ${\bf t}_{{\bf p}_n}$ on a collection of discs ${\bf D}^\times_{{\bf p}_n}$,  
 we obtain a trivialization  
%%
%%%%%%%%%%%%%%%%%%%%%%%%%%%%%%%%%%%%%%%%%%%%%%%%%%%%%%%%%%%%%%%%%%%%%%%%%%%%
%%
%%
\[
i_{ {\bf t}_{ {\bf p}_n} }: \overline F \left( G [[{\bf t}_{{{\bf p}_n}}]] \right) 
 \; {\widetilde{} \atop \rightarrow} \;  \Gamma \left( \W_M|_{ {\bf D}^\times_{ {\bf t}_{{\bf p}_n} } } \right),
\]   
 of the bundle $\W_{ {\bf  D}^\times_{ {\bf p}_n} }$
which we call the ${\bf t}_{{\bf p}_n}$-trivialization. 
%%%%
%%%%%%%%%%%%%%%%%%%%%%%%%%%%%%%%%%%%%%%%%%%%%%%%%%%%%%%%%%%%%%%%%%%%%%%%%
%%
%%%%%%%%%%%%%%%%%%%%%%%%%%%%%%%%%%%%%%%%%%%%%%%%%%%%%%%%%%%%%%%%%%%
We also obtain trivializations of the fiber 
 $G_{ {\bf p}_n } \; {\widetilde{}  \atop \rightarrow} 
\; \gamma \left( \W_M|_{ { \bf D}^\times_{ {\bf t}_{p_n}} }  \right)$, 
 and its dual
$G^*_{ {\bf p}_n }  \; {\widetilde{}  \atop \rightarrow} \;  
\gamma\left(\W^*_M|_{ { \bf D}^\times_{ {\bf t}_{p_n}} }    \right)$. 
%%
%%
%%%%%%%%%%%%%%%%%%%%%%%%%%%%%%%%%%%%%%%%%%%%%%%%%%%%%%%%%%%%%%%%%%%%%%%%%%%%%%%%%%%%%%%%%%
%%
Let us denote by $\left( {\bf g}_n, {\bf t}_{ {\bf p}_n } \right)$  the image  
of ${\bf g}_n \in {\bf G}_{{\bf z}_n}$ in $\W_M|_{ { \bf D}^\times_{ {\bf t}_{p_n}}} $  
and by $\left( \theta,  {\bf t}_{{\bf p}_n} \right)$ 
the image of $\theta \in G^*_{ {\bf p}_n  }$ in $\W^*_M|_{  { \bf D}^\times_{   {\bf t}_{p_n}     }} $    
under ${\bf t}_{{\bf p}_n}$-trivialization. 
%%%%
%%
%%%%%%%%%%%%%%%%%%%%%%%%%%%%%%%%%%%%%%%%%%%%%%%%%%%%%%%%%%%%%%%%%%%%%%%%%%%%%%%%%%%%%%%%
%%
%%
In order to define the required section $\overline{\F}(  {\bf p}_n )$ 
with respect to these trivializations through its matrix
elements   
we need to attach an element of $(\C( {\bf t}_{p_n} ))^n$ 
 to each triple $\left( {\bf g}_n, {\bf t}_{ {\bf p}_n }\right) 
\in \W_M|_{ { \bf D}^\times_{ {\bf t}_{p_n} }}$,  
$(\theta,  {\bf t}_{{\bf p}_n} ) \in \W^*_M|_{ { \bf D}^\times_{ {\bf t}_{p_n}}}$,  
and a section $i_{ {\bf t}_{{\bf p}_n}}({\bf h}_n)$ of $\W|_{ {\bf D}^\times_{ {\bf t}_{ {\bf p}_n }} }$ 
for ${\bf h}_n \in F\left(G_{  {\bf t}_{{\bf p}_n} }\right)$. 
%
%%%%
%%%%%%%%%%%%%%%%%%%%%%%%%%%%%%%%%%%%%%%%%%%%%%%%%%%%%%%%%%%%%%%%%%%%%%%%
%%
 The operation we define above is $\C^n$-linear in ${\bf g_n}$,   
 and $\theta \in G^*_{  {\bf p}_n }$ and $\C[[ (  {\bf t}_{  {p_i} }   )_n  ]]$-linear in $F( {\bf x}_n )$.  
%%%%
%%%%%%%%%%%%%%%%%%%%%%%%%%%%%%%%%%%%%%%%%%%%%%%%%%%%%%%%%%%%%%%%%%%%%%%%%%%%%
 It is sufficient to assign a function to the triples 
${\bf x}_n$, $\theta \in G_{ {\bf p}_n }$, ${\bf h}_n \in {\bf G}_{{\bf z}_n}$ 
in the ${\bf t}_{{\bf p}_n}$-trivialization. 
%%%%
Thus, we identify a $\C^n$-valued  
section $\overline {\widetilde{\F} } ({\bf p}_n)$ of  
$\W^*_{ {\bf D}^\times_{{\bf p}_n} }$, 
with the section $\overline F( {\bf x}_n)$ of $\W_{ {\bf D}^\times_{{\bf p}_n} }$ by means 
of formula \eqref{mainfo}. 
%%
%%%%%%%%%%%%%%%%%%%%%%%%%%%%%%%%%%%%%%%%%%%%%%%%%%%%%%%%%%%%%%%%%%%%%%%%%%%%%%%%%%%%

%%%%%%%%%%%%%%%%%%%%%%%%%%%%%%%%%%%%%%%%%%%%%%%%%%%%%%%%%%%%%%%%%%%%%%%%%%%%%%
%%%%%%%%%%%%%%%%%%%%%%%%%%%%%%%%%%%%%%%%%%%%%%%%%%%%%%%%%%%%%%%%%%%%%%%%%%%%%%
%%
Let $\widetilde{\bf t}_{   {\bf p}_n } = \left(\rho_j({\bf t}_{   {\bf p}_j  }) \right)_n$ be another coordinate.
%%
%%%%%%%%%%%%%%%%%%%%%%%%%%%%%%%%%%%%%%%%%%%%%%%%%%%%%%%%%%%%%
%%
%%
 Then, using the above arguments, we construct analogously
a section $\overline{ \widetilde\F}( {\bf p}_n )$ by the formula
%%
%%%%%%%%%%%%%%%%%%%%%%%%%%%%%%%%%%%%%%%%%%%%%%%%%%%%%%%%%%%%%%%%%%%%%%%%%%%%%%%%%%%%%%%%%%
%%
\[
\overline  {\widetilde{\F} } ({\bf p}_n)= \left[ \left(   \widetilde{\bf y}_n, 
 \widetilde \F_{ i_{ \widetilde{\bf t}_{{\bf p}_n} } } ( \widetilde{\bf h}  )
   \right) \right]   
= \left[ \left( \widetilde \theta,  f({\widetilde {\bf x}}_n)  \right) \right]= \overline{F}( {\widetilde {\bf x}}_n), 
\]
for $\widetilde{\bf y}_n=(\widetilde \theta, \widetilde{\bf t}_{ {\bf p}_n })$. 
%%
%%%%%%%%%%%%%%%%%%%%%%%%%%%%%%%%%%%%%%%%%%%%%%%%%%%%%%%%%%%%%%%%%%%%%%%%%%%%%%%%%%%%%%%%%%%%%%%%%%
%%
 Since $\left(i^{-1}_{ \widetilde{\bf t}_{ {\bf p}_n } } \circ i_{{\bf t}_{ {\bf p}_n } }\right)$  
is an automorphism of $G_{{\bf p}_n}$, we represent a 
   change of variables $\widetilde{t}_{ p_j }=\rho_j(z_j)$ 
in terms of composition of trivializations 
\begin{equation}
\label{kozel}
\rho_j (z_j) \mapsto  i^{-1}_{ \widetilde{t}_{j, p}  } \circ i_{ t_{j, p} }, 
\end{equation}
and, therefore, 
relate $\F_{i_{\widetilde{\bf t}_{{\bf p}_n}}}(\widetilde {\bf h}_n)$ with $\F_{i_{{\bf t}_{{\bf p}_n}}}({\bf h}_n)$. 
%% 
%%%%%%%%%%%%%%%%%%%%%%%%%%%%%%%%%%%%%%%%%%%%%%%%%%%%%%%%%%%%%%%%%%%%%%%%%%%%%%%%%%%
%%
Since \eqref{kozel}  
defines a representation on $G$ of the group ${\rm Aut}_j\; \Oo^{(1)}_j$ of changes of coordinates, then  
 then $G_{ {\bf p}_n }$ is canonically identified with the twist of $G$ by the 
 ${\bf Aut}_n \; {\bm \Oo}^{(1)}_n$-torsor of formal coordinate at $p_j$.  
%%
%%%%%%%%%%%%%%%%%%%%%%%%%%%%%%%%%%%%%%%%%%%%%%%%%%%%%%%%%%%%%%%%%%%%%%%%%%%%%%%%%%
%%%%
%%%%%%%%%%%%%%%%%%%%%%%%%%%%%%%%%%%%%%%%%%%%%%%%%%%%%%%%%%%%%%%%%%%%%%%%%%%%%%%%%%%%%%%%%%%%%%%%%%%%%%%%%%%%%%%%%%
%%
Using definition of a torsor one sees that  
prescribed rational functions of the space $\Theta\left(n, k, G_{ {\bf t}_{{\bf p}_n} }, U\right)$
can be treated as 
${\bf Aut}_n\; {\bm \Oo}^{(1)}_n$-torsor  
of the product of groups of a coordinate transformation, namely,    
%%%%
%%
%%%%%%%%%%%%%%%%%%%%%%%%%%%%%%%%%%%%%%%%%%%%%%%%%%%%%%%%%%%%%%%%%%%%
%%
% as we already explained in the proof of Proposition \eqref{},  %6.4.7 
%%
that 
%%%%%%%%%%%%%%%%%%%%%%%%%%%%%%%%%%%%%%%%%%%%%%%%%%%%%%%%%%%%%%%%
%%
${\bf x}_n = \left( R(\rho_n)^{-1}. ({\bf g}_n),  \widetilde{\bf t}_{ {\bf p}_n } \right)$, 
 ${\bf y}_n = \left(\theta.R(\rho_n),  \widetilde{\bf t}_{ {\bf p}_n }\right)$.
Thus, we relate the l.h.s and r.h.s. of \eqref{mainfo}. 
%%

%%%%%%%%%%%%%%%%%%%%%%%%%%%%%%%%%%%%%%%%%%%%%%%%%%%%%%%%%%%%%%%%%%%%%%%%%%%%%%%%%%%%%%%%%%%%
To finish proof of proposition, it remains to show that $\overline{F}( {\bf x}_n)$ is invariant 
with respect to changes of coordinates.  We have the following 
%%
%%
%%%%%%%%%%%%%%%%%%%%%%%%%%%%%%%%%%%%%%%%%%%%%%%%%%%%%%%%5
%%%%%%%%%%%%%%%%%%%%%%%%%%%%%%%%%%%%%%%%%%%%%%%%%%%%%%%%%
\begin{lemma} 
\label{nezc}
For generic elements of the space of prescribed rational functions 
$F \left({\bf x}_n \right) \in \Theta\left(n, k, G_{ {\bf z}_n }, U\right)$  
for an admissible Lie algebra,  
 $\overline{F}({\bf x}_n)$   
   are canonical, i.e., independent on 
 changes 
\begin{eqnarray}
\label{zwrho}
{\bf z}_{n +k} \mapsto \widetilde{\bf z}_{n+k}= \left({\bm \rho}_i({\bf z}_i)\right)_{n+k}, \quad 1 \le i \le n+k, 
\end{eqnarray}
 as local coordinates of  ${\bf z}_n$ and $\widetilde{\bf z}_k$,  
 at points ${\bf p}_n$ and $\widetilde{\bf p}_k$. 
\end{lemma}
%%%%%%%%%%%%%%%%%%%%%%%%%%%%%%%%%%%%%%%%%%%%%%%%%%%%%%%%%%%%%%%%%%%%%%%%%%%%%%%%%%%%%%%%%%%%%%%
%%%%%%%%%%%%%%%%%%%%%%%%%%%%%%%%%%%%%%%%%%%%%%%%%%%%%%%%%%%%%%%%%%%%%%%%%%%%%%%%%%%%%%%%%%%%%%%
%%
\begin{remark}
%%
%%%%%%%%%%%%%%%%%%%%%%%%%%%%%%%%%%%%%%%%%%%%%%%%%%%%%%%%%%%%%%%%%%%%%%%%%%%5
%%  
 A generalization Lemma \ref{nezc}
 for the case of a arbitrary smooth manifold  
will be given in \cite{Z}. 
\end{remark}
%%
%%%%%%%%%%%%%%%%%%%%%%%%%%%%%%%%%%%%%%%%%%%%%%%%%%%%%%%%%%%%%%%%%%%%%%%%%%%%%%%%%%%%%%%%%%%%%%%%%%%
%%%%%%%%%%%%%%%%%%%%%%%%%%%%%%%%%%%%%%%%%%%%%%%%%%%%%%%%%%%%%%%%%%%%%%%%%%%%%%%%%%%%%%%%%%%%%%%%%%%
%\begin{proof}
%%
%%
%%%%%%%%%%%%%%%%%%%%%%%%%%%%%%%%%%%%%%%%%%%%%%%%%%%%%%%%%%%%%%%%%%%%%%%%%%%%%%%%%%%%%%%%%%%%%%%%
Indeed, consider the vector 
\begin{equation}
\label{overphi}
\overline{F}(\widetilde{\bf x}_n)= \left[ 
F \left( {\bf g}_n, \widetilde{\bf z}_n \; {\bf d} \widetilde {\bf z}_{ {\it i}(n)} \right)  \right].    
\end{equation}
%%
%%%%%%%%%%%%%%%%%%%%%%%%%%%%%%%%%%%%%%%%%%%%%%%%%%%%%%%%%%%%%%%%%%%%%%%%%%%%%%%%%%%%%%%%%%
Note that 
 $d\widetilde{z}_j = \sum\limits_{i=1}^n dz_i \;  
{\partial_{z_i} \rho_j}$,
%%  
%%
%%\nn
%%
$\partial_{z_i} \rho_j = \frac
{\partial \rho_j} {\partial z_i } $.    
By the definition of the action of ${\bf Aut}_n\; {\bm \Oo}^{(1)}_n$,  
when rewriting $d\widetilde{\bf z}_i$,
we have  
%%
%
%%
%%%%%%%%%%%%%%%%%%%%%%%%%%%%%%%%%%%%%%%%%%%%%%%%%%%%%%%%%%%%%%%%%%%%%%%%%%%%%%%%%%%%%%%%%%%%%%%%%%%
%%
%%  
\begin{eqnarray*}
&&   \overline{F}(\widetilde{\bf x}_n) =\overline{\F} ( 
{\bf g}_n, \widetilde{\bf z}_n \; {\bf d} \widetilde{ \bf z}_n)   
\\
&&
\qquad =  {\rm R}( {\bm \rho}_n) \; 
 \left[  F \left( {\bf g}_n, 
{\bf z}_n \; {\bf d} \widetilde{\bf z}_{i(n)} \right)
\right]
\\
 && \qquad =  {\rm R}({\bm \rho}_n) \; 
\left[  F \left( {\bf g}_n, 
{\bf z}_n \;  
\sum\limits_{j=1}^n \partial_j \rho_{i(n)} \; dz_j  \right)
\right]. 
\end{eqnarray*}
 By using \eqref{ldir1} and linearity of the mapping $F$, 
 we obtain from the last equation 
\begin{equation}
\label{norma}
\overline{F}(\widetilde{\bf x}_n) 
=
\overline{F} ({\bf g}_n, \widetilde{\bf z}_n \;{\bf d}\widetilde{z}_n)  
=
 \left[
 F \left(  {\bf g}_n, {\bf z}_n \; {\bf dz}_{i(n)} \right)   
\right], 
\end{equation}
with  
\begin{equation}
\label{rovno}
{\rm R} ({\bm \rho}_n)=  
\left[ \widehat \partial_{J} \rho_{i(I)}\right]
=
\left[
\begin{array}{c}
\widehat \partial_{J} \rho_{i_1(I)}
\\
\widehat \partial_{J } \rho_{i_2(I)} 
\\
\cdots  
\\
\widehat \partial_{J} \rho_{i_n(I)}    
\end{array}
\right]. 
\end{equation}
The index operator $J$ takes the value of index $z_j$ of arguments in the vector \eqref{norma},  
while the index operator $I$ takes values of index of differentials $dz_i$ in each entry of 
the vector $\overline{F}$ \eqref{overphi}. 
 Thus, the index operator 
$i(I)=(i_{I}, \ldots, i_n(I))$      
is given by consequent cycling permutations of $I$. 
Taking into account the property \eqref{ldir1},  
we define the operator 
\begin{equation}
\label{hatrho}
\widehat\partial_{J } \rho_a =   
\exp\left(   - \sum\limits_{ {\bf r}_n, \; 
\sum\limits_{i=1}^n r_i  \ge 1 }  
r_J\; \beta^{(a)}_{{ \bf r}_n }\; \zeta^{r_1}_1 \;  \ldots \; 
 \zeta^{r_J}_J \ldots \zeta^{r_n}_n \; \partial_{z_J} 
 \right), 
\end{equation}
which contain index operators $J$ as index of a  
dummy variable $\zeta_J$ turning into $z_j$, $j=1, \ldots, n$.  
\eqref{hatrho} acts on each argument of maps $F$ in the vector $\overline{F}$ \eqref{overphi}.   
Due to properties of the Lie algebra $\mathcal G$ required above, 
the action of operators $R\left({\bm \rho}_n\right)$ 
 on ${\bf g}_n \in G$ results in a sum of finitely many terms.
%%
%%%%%%%%%%%%%%%%%%%%%%%%%%%%%%%%%%%%%%%%%%%%%%%%%%%%%%%%%%%%%%%%%%%%%%%%%
  In  \cite{BZF}, it is proven 
%%%%%%%%%%%%%%%%%%%%%%%%%%%%%%%%%%%%%%%%%%%%%%%%%%%%%%%%%%%%%%%%%%%%%%%%%
%% 
%%
%%%%%%%%%%%%%%%%%%%%%%%%%%%%%%%%%%%%%%%%%%%%%%%%%%%%%%%%%%%%%%%%%%%%%%%%%%%%%%%%%%
%%
\begin{lemma}
\label{podnik}
The mappings  
\[
{\bm \rho}_n({\bf z}_j ) \mapsto R 
\left({\bm \rho}_n \right), 
\]
for $1 \le j \le n$, 
 define a representation of  ${\bf Aut}_n \; {\bm \Oo}_n$
on ${\bf G}_{{\bf z}_n}$ by  
\[
{\rm R} 
\left(\rho \circ \widetilde{\rho}\right) = {\rm R} 
\left(\rho\right) \; {\rm R} 
\left(\widetilde{\rho}\right), 
\]
for $\rho$, $\widetilde{\rho} \in {\bf Aut}_n \; {\bm \Oo}_n$.  
\end{lemma}  
%%
%%%%%%%%%%%%%%%%%%%%%%%%%%%%%%%%%%%%%%%%%%%%%%%%%%%%%%%%%%%%%
%%
Using Lemma \ref{podnik},  
 we then conclude that  
 the vector $\overline{F}$ \eqref{overphi} is invariant, 
i.e., 
\begin{equation*}
\label{phiproperty}
 \overline{F} (\widetilde{\bf x}_n) = \overline{F} \left(
{\bf g}_n, \widetilde{\bf z}_n\; {\bf d}\widetilde{\bf z}_n \right) 
=  \overline{F} \left( {\bf g}_n, {\bf z}_n \;{\bf dz}_n\right)= \overline{F} ({\bf x}_n).     
\end{equation*}
%%
%%%%%%%%%%%%%%%%%%%%%%%%%%%%%%%%%%%%%%%%%%%%%%%%%%%%%%%%%%%%%%%%%%%%%%%%%%%%%%%%%%
%%%%%%%%%%%%%%%%%%%%%%%%%%%%%%%%%%%%%%%%%%%%%%%%%%%%%%%%%%%%%%%%%%%%%%%%%%%%%%%%%%
%%
The definition of prescribed rational functions 
$F({\bf x}_n) \in \Theta\left(n, k, G_{ {\bf z}_n }, U\right)$ 
consists of two conditions on $F$. 
 The first requires the existence of positive 
integers $\beta^n_m(v_i, v_j)$ depending on $v_i$, $v_j$ only, and the second 
restricts orders of poles of corresponding sums. 
%%%%
%%%%%%%%%%%%%%%%%%%%%%%%%%%%%%%%%%%%%%%%%%%%%%%%%%%%%%%%%%%%%%%%%%%%%%%%%%%%%%%%%%%%%%%%%%%%%%%%%%%
%%%%%%%%%%%%%%%%%%%%%%%%%%%%%%%%%%%%%%%%%%%%%%%%%%%%%%%%%%%%%%%%%%%%%%%%%%%%%%%%%%%%%%%%%%%%%%%%%%%
%%
The insertions of Lie algebra $k$ elements $\left({\bf g}_{k}, {\bf t}_{ {\bf p}_k} \; {\bf dt}_{{\bf p}_k} \right)$  
 which are present in the definition  of prescribed rational functions 
keep functions $F$  invariant with respect to coordinate changes \eqref{zwrho}.   
Thus, the construction of spaces \ref{poyma} is invariant under the action of the group 
${\bf Aut}_{n+k} \; {\bm \Oo}^{(1)}_{n+k}$.  
\end{proof}
%%%%
%%%%%%%%%%%%%%%%%%%%%%%%%%%%%%%%%%%%%%%%%%%%%%%%%%%%%%%%%%%%%%%%%%%%%%%%%%%%%%%%%%%%%%%%%%%%
%%%%%%%%%%%%%%%%%%%%%%%%%%%%%%%%%%%%%%%%%%%%%%%%%%%%%%%%%%%%%%%%%%%%%%%%%%%%%%%%%%%%%%%%%%%%
\subsection{The bundle dual to $\W_M$}  
It is still possible to define a fiber bundle in the dual formulation 
 when the conditions on grading subspaces of $G_{{\bf p}_n}$ are missing. 
%%
%%%%%%%%%%%%%%%%%%%
%%
The advantage of the dual (defined with respect to an appropriate form) fiber bundle  
${\W}^\dagger_{M}$  
is that to define it we do not need to assume that the $\C$-grading  
on $G_{ {\bf p}_n }$ is bounded from below or that the graded components are finite-dimensional. 
Nevertheless, we have to assume that a Lie algebra $\mathcal G$ satisfies remaining conditions of
subsection \ref{algebra}. 
%%%%
%%%%%%%%%%%%%%%%%%%%%%%%%%%%%%%%%%%%%%%%%%%%%%%%%%%%%%%%%%%%%%%%%%%%%%%%%%%%%
%%%%%%%%%%%%%%%%%%%%%%%%%%%%%%%%%%%%%%%%%%%%%%%%%%%%%%%%%%%%%%%%%%%%%%%%%%%%%
Introduce the canonical residue map 
\begin{equation}
\label{resu}
{\rm Res}\;_{ {\bf t}_{{\bf p}_n} } : {\bf t}_{{\bf p}_n}  \longrightarrow \C^n,   
\end{equation}
with separate residues for each variable. 
%%%%
%%%%%%%%%%%%%%%%%%%%%%%%%%%%%%%%%%%%%%%%%%%%%%%%%%%%%%%%%%%%%%%%%%%%%%%%%%%
%%%
Since the space of differentials  
${\bm \Omega}_{{\bf p}_n}$ on the 
punctured discs ${\bf D}^\times_{{\bf p}_n}$ is already represented in the definition 
of $F({\bf p}_n)$,  
then the the map \eqref{resu} 
gives rise to a pairing
\begin{eqnarray*}
  \gamma \left({\W}^\dagger_M|_{{\bf D}^\times_{ {\bf p}_n }} \right)  \times 
  \gamma \left(\W_M|_{ {\bf D}^\times_{  {\bf p}_n } } \right)   \rightarrow \C^n,  
\nn
\eta, \mu \mapsto {\rm Res}\;_{{\bf t}_{ {\bf p}_n } } \left(  \eta, \mu \right), 
\end{eqnarray*} 
for $\eta \in \gamma \left( {\W}^\dagger|_{D^\times_{{\bf p}_n}}\right)$ 
and $\mu \in \gamma\left(\W|_{D^\times_{{\bf p}_n}}\right)$ 
belong to corresponding space of fibers.   
%%
%%
%%%%%%%%%%%%%%%%%%%%%%%%%%%%%%%%%%%%%%%%%%%%%%%%%%%%%%%%%%%%%%%%%%%%%%%%%%%%%%%%%%%%%%
%%%%%%%%%%%%%%%%%%%%%%%%%%%%%%%%%%%%%%%%%%%%%%%%%%%%%%%%%%%%%%%%%%%%%%%%%%%%%%%%%%%%%%
%%
Using this pairing, we obtain 
for each fiber $\mu$ of $\W_M|_{{\bf D}^\times_{{\bf p}_n}}$,    
a linear 
operator on $\W_M|_{{\bf D}^\times_{{\bf p}_n}}$ given by 
${\rm Res}\;_{{\bf t}_{ {\bf p}_n} } \left( {\W}^\dagger_M, \mu \right)$. 
%%
%%
%%%%%%%%%%%%%%%%%%%%%%%%%%%%%%%%%%%%%%%%%%%%%%%%%%%%%%%%%%%%%%%%%%%%%%%%%%%%%%%%%
%%
Thus, we obtain a well-defined linear map 
\begin{equation}
\label{tosca}
 {\W}^\dagger_{ {\bf D}\times_{ {\bf p}_n} }: \gamma\left(\W_M|_{{\bf D}\times_{ {\bf p}_n }}
\right)  
\rightarrow {\rm End} \left( \gamma \left(\W_M|_{ {\bf D}\times_{{\bf p}_n}
  }\right) \right). 
\end{equation}
%%
%%%%
%%
%%%%%%%%%%%%%%%%%%%%%%%%%%%%%%%%%%%%%%%%%%%%%%%%%%%%%%%%%%%%%%%%%%%%%%%%%%%%%%%%%%%%%%%%%%%%%%%%%%%%
%%
For formal coordinates ${\bf t}_{{\bf p}_n}$ on ${\bf D}^\times_{{\bf p}_n}$, 
 a fiber $\mu = f({\bf x})$  
of $W_M|_{{\bf D}^\times_{{\bf p}_n}}
$ 
with $g_{{\bf p}_n} \in G_{{\bf p}_n}$ 
 with respect to the ${\bf t}_{{\bf p}_n}$-trivialization, the map 
 \eqref{tosca} is just ${\bf x}_n$.  
The geometrical information contained in ${\W}^\dagger_M|_{ {\bf D}^\times_{{\bf p}_n}
  }$ 
is equivalent to that of  
$\W_M|_{ {\bf D}^\times_{{\bf p}_n}
 }$. 
%%
%%%%

%%%%%%%%%%%%%%%%%%%%%%%%%%%%%%%%%%%%%%%%%%%%%%%%%%%%%%%%%%%%%%%%%%%%%%%%%%%%%%%%%%%%%%%%
%%%%%%%%%%%%%%%%%%%%%%%%%%%%%%%%%%%%%%%%%%%%%%%%%%%%%%%%%%%%%%%%%%%%%%%%%%%%%%%%%%%%%%%%
%%
Proposition \ref{mainpro} 
provides us with a way how to,  
starting from an admissible infinite-dimensional Lie algebra 
construct explicitly 
 a fiber  
bundle $\W_M$ over a smooth complex curve $M$,
with canonical sections $\F({{\bf p}_n})$ of $\W_M|_{ {\bf D}_{ {\bf p}_n } }$ 
and fibers 
with values in ${\rm End} \left(G_{ {\bf p}_n }\right)$ for any collection of non-intersecting disks 
${\bf D}_{{\bf p}_n}$ on $M$. 
%%%%
%%
%%%%%%%%%%%%%%%%%%%%%%%%%%%%%%%%%%%%%%%%%%%%%%%%%%%%%%%%%%%%%%%%%%%%%%%%%%%%%%%%%%%%%%%%%%%
%%%%%%%%%%%%%%%%%%%%%%%%%%%%%%%%%%%%%%%%%%%%%%%%%%%%%%%%%%%%%%%%%%%%%%%%%%%%%%%%%%%%%%%%%%%
%%
%% 
%% 
Due to the assumptions in the definition of an admissible Lie algebra, the filtration    
%%
%%%%%%%%%%%%%%%%%%%%%%%%%%%%%%%%%%%%%%%%%%%%%%%%%%%%%%%%%%%%%%%%%%%%%%%%%%%%%%%%%%%%%%%%%
%%
  $G_{   {\bf t}_{ {\bf p}_n }   , \le m } = \bigoplus^m_{n=K} G_{   {\bf t}_{  {\bf p}_n  }, n    }$,    
is preserved by ${\bf Aut}_n \; {\bm \Oo}_n^{(1)}$.
Then it is possible to prove that 
 the exact sequences of ${\bf Aut}_n\; {\bm \Oo}^{(1)}_n$-modules 
\[
0 \rightarrow G_{     {\bf t}_{  {\bf p}_n  },   \le (m - 1)  }  
\rightarrow  G_{ {\bf t}_{ {\bf p}_n }, \le m } \rightarrow  G_{ {\bf t}_{  {\bf p}_n }, m }  \rightarrow  0,  
\]
gives rise to an exact sequence of vector bundles
\[
0 \rightarrow  \W_M^{  \le (m-1)  } \rightarrow \W_M^{ \le m } \rightarrow \W_M^m \rightarrow 0.
\]
%%%%
%%
%%%%%%%%%%%%%%%%%%%%%%%%%%%%%%%%%%%%%%%%%%%%%%%%%%%%%%%%%%%%%%%%%%%%%%%%%%%%%%%%%%%%%%%%%%%%%%
%%%%%%%%%%%%%%%%%%%%%%%%%%%%%%%%%%%%%%%%%%%%%%%%%%%%%%%%%%%%%%%%%%%%%%%%%%%%%%%%%%%%%%%%%%%%%%
\section{Applications}
\label{further}
%% 
%%
%%%%%%%%%%%%%%%%%%%%%%%%%%%%%%%%%%%%%%%%%%%%%%%%%%%%%%%%%%%%%%%%%%%%%%%%%%%%%
In this section we list multiple  
 applications of the notion of the bundle of prescribed
rational functions on complex manifolds  
\cite{Fei, Wag,  DiMaSe, TUY} 
in deformation theory \cite{Ma, BG, HinSch}, 
and algebraic topology of foliations \cite{Bott}.  
%%
%%%%
%%%%%%%%%%%%%%%%%%%%%%%%%%%%%%%%%%%%%%%%%%%%%%%%%%%%%%%%%%%%%%%%%%%%%%%%%%%%%%

%%%%%%%%%%%%%%%%%%%%%%%%%%%%%%%%%%%%%%%%%%%%%%%%%%%%%%%%%%%%%%%%%%%%%%%%%
%%%%%%%%%%%%%%%%%%%%%%%%%%%%%%%%%%%%%%%%%%%%%%%%%%%%%%%%%%%%%%%%%%%%%%%%% 
%%
%%
%%
The fiber bundle associated to spaces of prescribed 
rational functions on domains of arbitrary complex manifolds  
can be used in construction of generalizations of the Bott--Segal theorem \cite{BS}.  
%%  
%%
%%%%%%%%%%%%%%%%%%%%%%%%%%%%%%%%%%%%%%%%%%%%%%%%%%%%%%%%%%%%%%%%%%%%%%%%%%%%%%%%%%%%%%%%%%%
%%%%%%%%%%%%%%%%%%%%%%%%%%%%%%%%%%%%%%%%%%%%%%%%%%%%%%%%%%%%%%%%%%%%%%%%%%%%%%%%%%%%%%%%%%%
%%
As it was demonstrated in \cite{Wag}, the ordinary 
 cohomology of vector fields on complex manifolds turns to be not the most effective and general one.   
In order to avoid trivialization 
and reveal a richer cohomological structure  
of complex manifolds cohomology, one has to 
treat \cite{Fei}  holomorphic vector fields as a sheaf rather than taking 
global sections. 
%% 
%%%%%%%%%%%%%%%%%%%%%%%%%%%%%%%%%%%%%%%%%%%%%%%%%%%%%%%%%%%%%%%%%%%%%%%%%%%%%%%%%%%%%%
%%
In analogy with the construction of \cite{BS}, the cohomology of a foliation  
over a smooth complex curve $M$ can be expressed in terms of cohomology 
of a canonical complex $\left(C^n_M, \delta^n_m \right)$ for an auxillarly bundle $\W_M$ of prescribed 
rational functions 
 with intrinsic action of $\delta^n_m$. 
Constructions presented in this paper are also useful for purposes of cosimplitial cohomology \cite{Wag}
and computations in 
the 
 deformation theory of complex manifolds 
\cite{Ma, Fei, HinSch, GerSch, Kod}. 
%%

%%%%%%%%%%%%%%%%%%%%%%%%%%%%%%%%%%%%%%%%%%%%%%%%%%%%%%%%%%%%%%%%%%%%%%%%%%%%%%%
%%%%%%%%%%%%%%%%%%%%%%%%%%%%%%%%%%%%%%%%%%%%%%%%%%%%%%%%%%%%%%%%%%%%%%%%%%%%%%%
%%
%% 
Prescribed rational functions approach is applicable to studies of cohomology 
and characteristic classes of 
foliations of complex manifolds. 
%%
%%%%%%%%%%%%%%%%%%%
%%
In \cite{Lo} a smooth structure on the leaf space
$M/\mathfrak F$ of a foliation $\mathfrak F$ of codimension $k$ on
a smooth manifold $M$ that allows to apply to $M/\mathfrak F$ the
same techniques as to smooth manifolds.
%%
%%%%%%%%
%% 
In \cite{Lo} characteristic classes for a
foliation as  elements of the cohomology of certain bundles over the
leaf space $M/\mathfrak F$ are defined.   
%%
%%%%%%%
It would be interesting to develop also intrinsic 
and purely coordinate independent theory of a smooth manifold 
foliation cohomology involving bundles of prescribed rational functions.  
%%
%%%%%%%%%%%%%%%%%%%%%%%%%%%%%%%%%%%%%%%%%%%%%%%%%%%%%%%%%%%%%%%%%%%%%%%%%%%%%%%%%%%%%%%%%%%%%%%
%%%%%%%%%%%%%%%%%%%%%%%%%%%%%%%%%%%%%%%%%%%%%%%%%%%%%%%%%%%%%%%%%%%%%%%%%%%%%%%%%%%%%%%%%%%%%%%

\end{document}